\documentclass{article}

\usepackage{arxiv}

\usepackage[utf8]{inputenc} % allow utf-8 input
\usepackage[T1]{fontenc}    % use 8-bit T1 fonts
\usepackage{hyperref}       % hyperlinks
\usepackage{url}            % simple URL typesetting
\usepackage{booktabs}       % professional-quality tables
\usepackage{amsfonts}       % blackboard math symbols
\usepackage{nicefrac}       % compact symbols for 1/2, etc.
\usepackage{microtype}      % microtypography
\usepackage{lipsum}
\usepackage{graphicx}
\usepackage{color}
\usepackage{caption}
\usepackage{amsthm}
\usepackage{amssymb}
\usepackage{amsmath}

\newtheorem{definition}{Definition}[section]
\newtheorem{theorem}{Theorem}[section]
\newtheorem{proposition}{Proposition}[section]

\newtheorem{example}{Example}[section]
\newtheorem{lemma}{Lemma}[section]

\newtheorem{corollary}{Corollary}[section]
\newtheorem{remark}{Remark}[section]

\title{A Novel Approach for Computing Hilbert Functions }

\author{
	Maria Barouti \\
	American University\\
	Dept. of Mathematics and Statistics\\
	Washington, DC 20016\\
	\texttt{barouti@american.edu} 
}

\begin{document}
\maketitle

\begin{abstract}
The Hilbert function for any graded module, $M$, over a field $k$, is defined by 
${\rm HF}(M,b)={\rm dim}_k M_b$,
where integer $b$ indicates the graded component being considered. 
One standard approach to compute the Hilbert function is to come up with a free-resolution for the graded module $M$ and another is via a Hilbert power series which serves as a generating function.  The proposed approaches enable generating the values of a Hilbert function when the graded module is a quotient ring over a field by using combinatorics and homological algebra.
Two of these approaches named the lcm-Lattice method and the Syzygy method, are conceptually combinatorial and work for any polynomial quotient ring over a field. The third approach named Hilbert function table method, also uses syzygies but the approach is better described in terms of homological algebra.
\end{abstract}

\section{Introduction}
In this work we address only polynomial rings and their quotient rings.  Therefore, all definitions pertaining a ring are meant to apply to \emph{commutative} rings.  As a consequence, all our modules are two-sided modules and all our ideals are two-sided ideals.  In fact, we require more structure of our objects: we require that they should be graded objects.  This is made precise by the first two definitions.

\begin{definition}
A graded ring $R$ is a ring that has a direct sum decomposition into abelian additive groups
$R= \displaystyle \bigoplus _{n\in \mathbb{Z}^{\geq 0}} R_n=R_0 \bigoplus {R_1} \bigoplus {R_2} \bigoplus {R_3} \bigoplus {...}$, such that $R_{s} R_{r} \subseteq R_{s+r}$  for all $r, s \geq 0$.
\end{definition}
There is also the closely related concept of a graded module.
\begin{definition}
A graded module $M$ over any graded ring $R$ is a module that can be written as a direct sum $M=  \displaystyle \bigoplus _{i\in \mathbb{Z}^{\geq 0}} M_{i}$ satisfying $R_{i} M_{j} \subseteq M_{i+j}$ for all $i, j \geq 0$.
\end{definition}
Both concepts of a graded object are standard; see for example \cite{villarreal2015monomial} pages 12 and 13. An example of a graded ring and also of a graded module is the polynomial ring $k[x_1,x_2,...,x_a]$ over a field $k$. The direct decomposition in this case is  $R=k[x_1,x_2,...,x_a]= \displaystyle \bigoplus _{b\in \mathbb{Z}^{\geq 0}}R_b$,  where each $R_b={\rm span}_{k} \left\{{\rm monomials\ of\ degree}\ b\right\}$. This means that each $R_b$ is a $k$-vector space.  Moreover, since every ideal $I$ of a ring $R$ is an $R$-module one can easily prove the following result.

\begin{lemma}\label{L:characterization_graded_ideal}
An ideal $I$ is a graded ideal of a graded ring $R=\displaystyle \bigoplus _{n\in \mathbb{Z}^{\geq 0}} R_n$ if it can be written as a direct sum of ideals such that each summand corresponds to $I \cap R_n$ for $n \in \mathbb{Z}^{\geq 0}$.
\end{lemma}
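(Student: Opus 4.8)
The plan is to take the hypothesised decomposition at face value and verify that it equips $I$ with the structure of a graded module over $R$, the key observation being that an ideal is automatically an $R$-submodule of $R$, so the only genuinely substantive point is that the summands are compatible with the multiplication of $R$. First I would record that, by hypothesis, $I=\bigoplus_{n\in\mathbb{Z}^{\geq 0}}(I\cap R_n)$ is a direct sum of abelian additive groups, so the grading data $I_n:=I\cap R_n$ is already in place; it then remains only to establish the inclusion $R_i\,I_j\subseteq I_{i+j}$ for all $i,j\geq 0$, which is the second clause in the definition of a graded module over $R$.

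For that step I would argue on homogeneous elements: given $r\in R_i$ and $x\in I\cap R_j$, the product $rx$ lies in $R_iR_j\subseteq R_{i+j}$ because $R$ is a graded ring, and $rx$ lies in $I$ because $I$ is an ideal of $R$; hence $rx\in I\cap R_{i+j}=I_{i+j}$. Since every element of $R_i$ is such an $r$ and multiplication is biadditive, this yields $R_i\,I_j\subseteq I_{i+j}$. Combining this with the direct-sum decomposition supplied by the hypothesis shows that $I$, graded by $I_n=I\cap R_n$, is a graded module over $R$, i.e.\ a graded ideal, which is exactly the assertion.

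If one also wants the converse implication — that a graded ideal necessarily has this shape — I would begin from an arbitrary compatible grading $I=\bigoplus_i I_i$ together with the standard understanding that the inclusion $I\hookrightarrow R$ respects degrees, i.e.\ $I_i\subseteq R_i$. Then $I_i\subseteq I\cap R_i$ is immediate, and for the reverse containment one expands an element of $I\cap R_n$ into its homogeneous components with respect to the grading of $I$ and invokes uniqueness of homogeneous decomposition in $R$ to conclude that every component of degree $\neq n$ vanishes; hence $I_n=I\cap R_n$ and $I=\bigoplus_n(I\cap R_n)$.

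The computations involved are all routine; the only real point of care — and thus the main ``obstacle'' — is definitional bookkeeping: being a graded ideal means meeting both requirements for a graded module (a direct-sum decomposition \emph{and} the multiplicative compatibility), so the proof is not finished until $R_i\,I_j\subseteq I_{i+j}$ has been checked, even though the decomposition itself is handed to us. For the converse one must additionally make explicit that the grading on $I$ is the one induced from $R$, since the abstract definition of a graded module in principle allows an unrelated grading.
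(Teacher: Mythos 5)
Your proof is correct and is essentially the argument the paper has in mind: the paper omits the proof entirely, remarking only that it is easy once one notes that every ideal of $R$ is an $R$-module, and your verification that $R_i(I\cap R_j)\subseteq I\cap R_{i+j}$ (using $R_iR_j\subseteq R_{i+j}$ plus absorption by the ideal) is precisely that easy check. The converse direction you sketch is a harmless extra beyond what the lemma asserts.
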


\begin{definition}
An ideal I of $k[x_1,x_2,...,x_a]$ is homogeneous if and only if every homogeneous component of every polynomial $p(\bar {{\rm x}})$ is in $I$, where $\bar {{\rm x}}$ denotes an a-tuple $(x_1,x_2,....x_a)$  {\rm (see for example \cite{dummit2004abstract} page 299)}.
\end{definition}

Here are some easy-to-prove facts relevant to the present discussion about monomial ideals.
\begin{itemize}
\item A monomial ideal in $k[x_1,x_2,...,x_a]$ is, by definition, one generated by monomials (see \cite{dummit2004abstract} page 318).  Therefore, it is a homogeneous ideal since every monomial is a homogeneous polynomial.  A monomial ideal, is also, a graded ideal because  $k[x_1,x_2,...,x_a]$ is a graded ring. Hence we may apply Lemma \ref{L:characterization_graded_ideal}.
\item $R/I$ is a graded module since it has the following direct sum decomposition 
\begin{equation}
R/I=  \displaystyle \bigoplus _{b\in \mathbb{Z}^{\geq 0}} (R_{b}+I)/I,
\end{equation}
 where $b$ is the grading and $I$ is a monomial ideal in the polynomial ring $R$. Observe that every summand is also a module over the base field $k$ of polynomial ring $R$.
\end{itemize}

Our object of study is the graded modules $R/I$, where $R$ is a polynomial ring in finitely many variables over a field $k$ and $I$ is a finitely generated monomial ideal in $R$. In this setting, we have that for each $b \geq 0$ the summand $(R_{b}+I)/I$ is indeed a vector space since it is a module over a field. Furthermore, since the number of variables is finite each such summand is a finite dimensional vector space. This brings up a natural question: Given a summand with grading $b$, what is its dimension as a vector space over the base field $k$? This is in fact how the Hilbert function for the graded module is defined.

This definition can be illustrated by considering $R=k[x_1,x_2,x_3,x_4]$ and $I=\langle x_2^{4},x_{1}x_{4},x_3^2\rangle$. Then $R/I=\bigoplus_{i=0}^{\infty}R_i$, where $R_i=\left\{{\rm all\ polynomials\ equivalence\ classes\ in\ }R/I\ {\rm with\ representatives\ of\ degree\ i }\right\}$. Each $R_i$ is no longer a ring on its own but it is a $k$-vector space.  The dimension of these vectors spaces are, $\dim R_0 = 1$, $\dim R_1 = 4$, $\dim R_2 =8$, $\dim R_3 =12$, $\dim R_4=15$, and $\dim R_i=16$ for all $i \geq 5$. In general, we define the Hilbert function of $M$ as ${\rm HF}(M,b)={\rm dim}_k M_b$  for any graded module $M=  \displaystyle \bigoplus _{i\in \mathbb{N}} M_{i}$.
In particular, a basic result facilitating our computations is the ``rank-nullity'' theorem.
%which says that the rank plus the nullity of a linear transformation\\ $T: V \longrightarrow W$ is equal to the dimension of $V$. 
\begin{theorem}\label{rank-nullity Hilbert}
${\rm HF}(R,b)={\rm HF}(R/I,b)+{\rm HF}(I,b)$, where  $R=k[x_1,x_2,...,x_a]$  and $I$ is a monomial ideal.
\end{theorem}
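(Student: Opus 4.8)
The plan is to prove the identity \emph{one graded component at a time}, by exhibiting a short exact sequence of finite-dimensional $k$-vector spaces and then invoking the classical rank--nullity theorem for a linear map. Fix an integer $b \geq 0$. First I would record that each of the three spaces $R_b$, $\;I_b := I \cap R_b$, and $(R/I)_b := (R_b+I)/I$ is a finite-dimensional vector space over $k$: indeed $R_b = {\rm span}_k\{\text{monomials of degree } b\}$ is spanned by the finitely many degree-$b$ monomials in $a$ variables, and the other two spaces are, respectively, a subspace of $R_b$ and (via the displayed decomposition $R/I = \bigoplus_b (R_b+I)/I$) a quotient built from $R_b$. Here I would explicitly invoke that $I$ is a monomial ideal, hence homogeneous, hence — by Lemma~\ref{L:characterization_graded_ideal} — a graded ideal, so that $I = \bigoplus_{n} I_n$ with $I_n = I \cap R_n$; this is what makes $I_b$ the degree-$b$ piece of $I$ in the Hilbert-function sense.

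Next I would consider the $k$-linear map $\pi_b \colon R_b \to (R/I)_b$ obtained by restricting the canonical quotient homomorphism $\pi \colon R \to R/I$ to the degree-$b$ component. Surjectivity of $\pi_b$ is immediate from the grading: by the decomposition $R/I = \bigoplus_b (R_b+I)/I$, every element of $(R/I)_b$ is the class $\bar q$ of some $q \in R_b$, and $\pi_b(q) = \bar q$. For the kernel, suppose $q \in R_b$ with $\pi_b(q)=0$, i.e.\ $q \in I$; since $q$ is homogeneous of degree $b$ and $I$ is a homogeneous ideal, $q$ lies in $I \cap R_b = I_b$. Conversely $I_b \subseteq R_b$ maps to $0$. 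Hence $\ker \pi_b = I_b$ (equivalently, $\pi_b$ induces an isomorphism $R_b/I_b \cong (R/I)_b$).

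With surjectivity and the kernel identified, the rank--nullity theorem applied to $\pi_b$ gives
\[
\dim_k R_b \;=\; \dim_k \ker \pi_b + \dim_k \operatorname{im} \pi_b \;=\; \dim_k I_b + \dim_k (R/I)_b .
\]
Rewriting each term via ${\rm HF}(-,b) = \dim_k(-)_b$ yields ${\rm HF}(R,b) = {\rm HF}(I,b) + {\rm HF}(R/I,b)$, and since $b \geq 0$ was arbitrary this is the asserted identity.

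I do not expect a genuine obstacle in this argument; it is essentially the observation that a short exact sequence of finite-dimensional vector spaces is dimension-additive. The one point that must not be glossed over — and which I would flag as the crux — is that the whole argument rests on $I$ being a \emph{homogeneous} (equivalently, graded) ideal: this is what gives $I = \bigoplus_b I_b$, what identifies $(R/I)_b$ with $R_b/I_b$, and what forces a homogeneous degree-$b$ element of $I$ to lie in $I_b$. For a general non-homogeneous ideal the statement would not even be well posed, so the proof should cite explicitly that a monomial ideal is homogeneous and apply Lemma~\ref{L:characterization_graded_ideal}.
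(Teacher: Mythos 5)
Your proof is correct and follows essentially the same route as the paper's: both arguments reduce the identity to the rank--nullity theorem applied, degree by degree, to the short exact sequence $0 \to I_b \to R_b \to (R/I)_b \to 0$. In fact your version is the more complete one, since you explicitly identify $\ker \pi_b = I \cap R_b$ using homogeneity of the monomial ideal, a step the paper's proof leaves entirely implicit by stating only the abstract dimension count for a surjective linear map.
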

\begin{proof}
Let $T: V \longrightarrow W$ be a linear transformation; then using the inclusion map $i$ of the kernel \emph{into} $V$ we get a sequence of linear transformations:
\begin{center}
$0\longrightarrow {\rm ker(T)}\overset{i}\longrightarrow V \overset{T}\longrightarrow {\rm coker(T)} \longrightarrow 0$.
\end{center}
If $T$ is onto then ${\rm coker(T)} \cong {V/ {\rm ker(T)}}$. Then
\begin{center}
 ${\rm dim(ker(T))} - {\rm dim(V)} + {\rm dim(coker(T))}=0$.
\end{center}
\end{proof}
\begin{definition}
An exact sequence of modules is either a finite or an infinite sequence of modules and homomorphisms between them such that the image of one homomorphism equals the kernel of the next homomorphism {\rm (see \cite{dummit2004abstract} page 378)}.
\end{definition}
An example of an exact sequence is the sequence in the next lemma (see \cite{villarreal2015monomial} page 98) and the free resolution used in the Hilbert Syzygy theorem below (see \cite{eisenbud2013commutative} page 3). We shall refer to the exact sequence in the next lemma as \emph{the short exact sequence}.
Bookkeeping often requires a shift in the grading.  If $M=  \displaystyle \bigoplus _{i=0}^{\infty} M_i$ is a finitely generated $\mathbb{Z}^{\geq 0}$-graded module over $R$, then we denote $M(-d)$ to be the regrading of $M$ obtained by a shift of the grading of $M$. In this case, the graded component $M_i$ of $M$ becomes $M_{i+d}$ grading component of $M(-d)$.

 \begin{lemma}
Let M be a graded $R$-module. If $x_n \in R_d$ with ${\rm deg(x_n)}=d$, then there is a degree preserving exact sequence 
\begin{center}
$0 \rightarrow (0:x_n)(-d) \rightarrow M(-d) \overset{x_n}{\rightarrow} M \overset{\phi}{\rightarrow} M/x_{n}M \rightarrow 0$,
\end{center}
where $\phi(m)=m+x_{n}M$ and $(0:x_n)=\{m \in M |x_{n}m=0\}$.
\end{lemma}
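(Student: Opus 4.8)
The plan is to check exactness at each of the four nodes of the sequence and to verify that every map preserves degree; most of the actual content is the bookkeeping needed to guarantee that all the objects in sight are bona fide graded modules, so that the regradings $M(-d)$, $(0:x_n)(-d)$ and the quotient $M/x_nM$ make sense in the graded category.

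First I would show that $(0:x_n)$ and $x_nM$ are graded submodules of $M$. For the annihilator: if $m=\sum_i m_i$ with $m_i\in M_i$ and $x_nm=0$, then since $x_n\in R_d$ each $x_nm_i$ lies in $M_{i+d}$; these summands sit in distinct graded components, so $\sum_i x_nm_i=0$ forces $x_nm_i=0$ for every $i$, i.e. every homogeneous component of $m$ again annihilates $x_n$. A similar (easier) observation, using only that $x_n$ is homogeneous, gives $x_nM=\bigoplus_i x_nM_i$ with $x_nM_i\subseteq M_{i+d}$, so $x_nM$ is graded and $M/x_nM$ inherits a grading. At this point $(0:x_n)(-d)$ and $M(-d)$ are well-defined graded $R$-modules, and the maps in the statement are all defined.

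Next I would verify degree preservation. The inclusion $(0:x_n)(-d)\hookrightarrow M(-d)$ carries the same shift on both sides, hence is degree preserving. Multiplication by $x_n$ sends $M(-d)_i=M_{i-d}$ into $M_{i-d+d}=M_i$, so the middle map $M(-d)\overset{x_n}{\rightarrow}M$ is degree preserving — this is precisely why the shift $-d$ is introduced. Finally $\phi$ is degree preserving because the grading on $M/x_nM$ was defined so as to make it so.

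It then remains to check exactness: the inclusion is injective by definition; exactness at $M(-d)$ says $\ker(\cdot x_n)=(0:x_n)$, which is exactly the definition of $(0:x_n)$; exactness at $M$ says $\ker\phi=x_nM$, which is the definition of the quotient map $\phi$; and $\phi$ is plainly surjective. The only point where a reader might hesitate is in handling the $(-d)$ shifts: the shift only relabels the graded pieces and leaves the underlying sets and maps untouched, so purely set-theoretic identities such as $\ker(\cdot x_n)=(0:x_n)$ transport verbatim, whereas the degree-preservation claims genuinely exploit the shift. I do not anticipate any serious obstacle — the lemma is essentially the graded refinement of the elementary fact that $0\to\ker f\to A\overset{f}{\to}A\to{\rm coker}\,f\to 0$ is exact, applied to the endomorphism $f=\cdot x_n$ of $M$.
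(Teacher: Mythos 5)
Your proof is correct and complete: you verify that $(0:x_n)$ and $x_nM$ are graded submodules (the key point being that the homogeneous components $x_nm_i$ of $x_nm$ land in distinct graded pieces $M_{i+d}$, so they vanish individually), that the shift by $-d$ makes multiplication by $x_n$ degree preserving, and that exactness at each node reduces to the definitions of kernel, annihilator, and quotient. The paper itself supplies no proof of this lemma --- it is stated with a citation to Villarreal --- so there is no in-paper argument to compare against; your write-up is the standard argument one would find in that reference, namely the graded refinement of the four-term exact sequence $0 \rightarrow \ker f \rightarrow A \overset{f}{\rightarrow} A \rightarrow \mathrm{coker}\, f \rightarrow 0$ for $f$ equal to multiplication by $x_n$, and it would serve as a complete proof here.
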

The drawback of this sequence is that not all objects are necessarily free $R$-modules. Free $R$-modules are isomorphic to a direct sum of copies of $R$. The traditional approach (see \cite{W:B&W}) to compute the Hilbert function of a finitely graded $R$-module $M$ (of which our quotient polynomial rings are examples) is based on the following theorem (see page 45 of \cite{eisenbud2013commutative}).

\begin{theorem}{\rm (Hilbert Syzygy Theorem)}\\
Any finitely generated module $M$ over the ring $R=k[x_1,x_2,...,x_a]$ has a finite graded free resolution 
\begin{center}
$0\rightarrow P_n  \overset{\phi_n} {\rightarrow} P_{n-1} \rightarrow ... {\rightarrow} P_1  \overset{\phi_1}\rightarrow P_0$.
\end{center}
This implies that each $P_i$ is a finitely generated free $R$-module and $M \cong P_0/\ker \phi_1$.  Furthermore, $n \leq a$.
\end{theorem}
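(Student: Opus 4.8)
The plan is to argue by induction on $a$, the number of variables, which is also the parameter appearing in the bound $n \le a$. Two things must be established: first, that $M$ admits \emph{some} graded free resolution, and second, that such a resolution may be taken finite of length at most $a$. The first is immediate from Noetherianity: a finite set of homogeneous generators of $M$ gives a degree-preserving surjection $\phi_1 \colon P_0 \twoheadrightarrow M$ from a finite graded free module $P_0 = \bigoplus_j R(-d_j)$, so $M \cong P_0/\ker\phi_1$; the kernel is again finitely generated and graded (a submodule of a Noetherian module), so one repeats with $P_1 \twoheadrightarrow \ker\phi_1$, and so on, producing a graded free resolution $\cdots \to P_2 \to P_1 \to P_0$. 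What needs work is terminating this chain of syzygies after at most $a$ steps.

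For the induction, the base case $a = 0$ is trivial: $R = k$ is a field, every finitely generated graded module is a finite direct sum of shifted copies of $k$ and hence free, so $n = 0$. For the inductive step I would set $R' = k[x_1,\dots,x_{a-1}] \cong R/(x_a)$ and feed $M$ into the short exact sequence of the preceding lemma with $x_n = x_a$,
\[
0 \to (0:x_a)(-1) \to M(-1) \xrightarrow{\,x_a\,} M \to M/x_aM \to 0 .
\]
Both $M/x_aM$ and $(0:x_a)$ are annihilated by $x_a$, hence are finitely generated graded $R'$-modules, so by the inductive hypothesis each has projective dimension at most $a-1$ over $R'$. Since $R'$ has the two-term free $R$-resolution $0 \to R(-1) \xrightarrow{x_a} R \to R' \to 0$, the standard change-of-rings inequality $\operatorname{pd}_R N \le \operatorname{pd}_{R'} N + \operatorname{pd}_R R'$ gives $\operatorname{pd}_R(M/x_aM) \le a$ and $\operatorname{pd}_R\big((0:x_a)\big) \le a$, where $\operatorname{pd}_R$ denotes projective dimension, i.e. the length of a shortest graded free resolution over $R$. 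Splitting the displayed four-term sequence into the two short exact sequences $0 \to (0:x_a)(-1) \to M(-1) \to x_aM \to 0$ and $0 \to x_aM \to M \to M/x_aM \to 0$ and applying the horseshoe/mapping-cone construction then transfers the bound to $M$, yielding a graded free resolution $0 \to P_n \to \cdots \to P_0$ of $M$ with $n \le a$, and $M \cong P_0/\ker\phi_1$ by construction.

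I expect the main obstacle to be a genuine bookkeeping gap in that last step rather than any deep new input. The first of the two short exact sequences only bounds $\operatorname{pd}_R(x_aM)$ in terms of $\operatorname{pd}_R M$ again, so a naive count through the mapping cone produces merely $n \le a+1$. Closing the gap forces a passage to $\operatorname{Tor}^R_\bullet(-,k)$: since $x_a \in \mathfrak{m}$, multiplication by $x_a$ is the zero map on every $\operatorname{Tor}(-,k)$, so the long exact sequences split into short ones, and a comparison of $\operatorname{Tor}^R$ with $\operatorname{Tor}^{R'}$ coming from the same two-term resolution of $R'$ then gives $\operatorname{Tor}^R_i(M,k) = 0$ for $i > a$; one must additionally handle the case $\operatorname{depth} M = 0$, where $x_a$ — or, after enlarging $k$, a general linear form — is a zero-divisor on $M$. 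An equivalent but cleaner endgame, which I would ultimately prefer, is to show by the same induction on $a$ that the Koszul complex on $x_1,\dots,x_a$ is a free resolution of $k$ (the inductive point being that $x_a$ is a nonzerodivisor on $k[x_1,\dots,x_{a-1}]$), so that $\operatorname{Tor}^R_i(M,k) = H_i\big(K_\bullet(x_1,\dots,x_a) \otimes_R M\big) = 0$ for $i > a$; a graded Nakayama argument — which also ensures that finitely generated graded projective $R$-modules are free — then shows the $a$-th syzygy in a minimal graded free resolution carries no relations and is free, truncating the resolution at stage $a$ and giving $n \le a$.
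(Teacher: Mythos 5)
The paper does not prove this theorem: it is quoted from \cite{eisenbud2013commutative} (page 45) and used as a black box, so there is no in-paper argument to compare yours against. Judged on its own, your sketch converges on the standard proof, but only in its final form. The existence of a graded free resolution by iterated syzygies and the base case $a=0$ are fine. You are right, however, that the middle portion --- the change-of-rings inequality applied to $M/x_aM$ and $(0:x_a)$ together with the two short exact sequences --- does not close: the sequence $0 \to (0:x_a)(-1) \to M(-1) \to x_aM \to 0$ only yields $\operatorname{pd}_R(x_aM) \le \max\left(\operatorname{pd}_R M,\ \operatorname{pd}_R\bigl((0:x_a)\bigr)+1\right)$, which reintroduces the unknown $\operatorname{pd}_R M$ and at best gives $n \le a+1$; this is a genuine circularity, not mere bookkeeping, so that route should be dropped rather than patched. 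Your preferred endgame repairs it correctly and is exactly the textbook argument: the Koszul complex $K_\bullet(x_1,\dots,x_a)$ is a length-$a$ graded free resolution of $k$ (by induction, tensoring with $K_\bullet(x_a)$ and using that $x_a$ is a nonzerodivisor on $R/(x_1,\dots,x_{a-1})$), hence $\operatorname{Tor}^R_i(M,k)=0$ for $i>a$; in a minimal graded free resolution one has $\operatorname{Tor}^R_i(M,k)\cong P_i\otimes_R k$, so graded Nakayama forces $P_i=0$ for $i>a$, i.e.\ the $a$-th syzygy module is already free, and $M \cong P_0/\ker\phi_1$ holds by construction of the resolution. Provided you actually carry out the Koszul and minimal-resolution steps (including the graded Nakayama lemma you invoke), the proof is complete.
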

This exact sequence can also be written as
\begin{center}
$0\rightarrow P_n  \overset{\phi_n} {\rightarrow} P_{n-1} \rightarrow ... {\rightarrow} P_1  \overset{\phi_1}\rightarrow P_0 \rightarrow M \rightarrow 0$
\end{center}
since each $P_i$ is a free $R$-module for $0 \leq i \leq n$. If $R$ is a graded ring, the sequence above is in fact an exact sequence of graded free modules and graded homomorphism, where each term in the free resolution is of the form $P_i=R_{1_i}(- d_{1_i}) \oplus R_{2_i}(- d_{2_i}) \oplus \dots \oplus R_{l_i}(- d_{l_i})$. Then by applying Theorem \ref{rank-nullity Hilbert} in an inductive argument one obtains the following method for computing $HF(M,t)$
\[
{\rm HF}(M,t)=\sum_{i=0}^n (-1)^i \left( {\rm HF}(R_{1_i}(- d_{1_i}),t) + {\rm HF}(R_{2_i}(- d_{2_i}),t) + \dots + {\rm HF}(R_{l_i}(- d_{l_i}),t)\right).
\]
Another standard approach to compute the Hilbert function is via the Hilbert series.
\begin{definition}
Let $R= \displaystyle \bigoplus R_n$ be a graded ring. The Hilbert series of $R$ is defined to be the generating function 
\begin{center}
${\rm HS}(R,t)=\displaystyle\sum_{n=0}^{\infty} {\rm HF}(R,n)t^{n}$.
\end{center}
\end{definition}
Similarly, if $I$ is a homogeneous ideal of $R$, then the Hilbert series of $I$ is the formal power series  
\begin{center}
 ${\rm HS}(I,t)=\displaystyle\sum_{n=0}^{\infty} {\rm HF}(I,n)t^{n}$. 
\end{center}
Convergence is not an issue since we are working with formal power series. For the Hilbert series we have a counterpart to our result derived from the ``rank-nullity'' theorem.
\begin{theorem}
Let $R= \displaystyle \bigoplus _{n\geq 0} R_n$ be a graded ring and $I= \displaystyle \bigoplus _{n\geq 0} I_n$ be a graded ideal. Then 
\begin{center}
${\rm HS}(R/I,t)={\rm HS}(R,t)-{\rm HS}(I,t)$.
\end{center}
\end{theorem}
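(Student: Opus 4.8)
The plan is to obtain the identity by applying Theorem~\ref{rank-nullity Hilbert} separately in each graded degree and then packaging the resulting dimension equalities into an equality of formal power series. Although Theorem~\ref{rank-nullity Hilbert} was stated for a polynomial ring $R$ and a monomial ideal $I$, its proof uses nothing beyond the rank-nullity theorem for $k$-vector spaces, so the same argument applies verbatim to an arbitrary graded ring $R$ over $k$ together with a graded ideal $I$, as long as each graded piece is a finite-dimensional $k$-vector space, which is the standing assumption under which ${\rm HF}$ and ${\rm HS}$ are defined.

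First I would record the graded decomposition of the quotient. Since $I=\bigoplus_{n\geq 0} I_n$ is graded, Lemma~\ref{L:characterization_graded_ideal} gives $I_n=I\cap R_n$; combined with the decomposition $R/I=\bigoplus_{n\geq 0}(R_n+I)/I$ this yields $(R/I)_n\cong R_n/(R_n\cap I)=R_n/I_n$, so the $n$-th graded component of $R/I$ is exactly $R_n/I_n$. Next, for each fixed $n\geq 0$ the inclusion $I_n\hookrightarrow R_n$ and the quotient map $R_n\twoheadrightarrow R_n/I_n$ form a short exact sequence of finite-dimensional $k$-vector spaces
\[
0\longrightarrow I_n\longrightarrow R_n\longrightarrow R_n/I_n\longrightarrow 0,
\]
so rank-nullity (exactly as in the proof of Theorem~\ref{rank-nullity Hilbert}) gives ${\rm HF}(R,n)={\rm HF}(R/I,n)+{\rm HF}(I,n)$. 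Finally I would multiply this equality by $t^n$ and sum over $n\geq 0$: since the formal power series over $k$ form a ring, term-by-term addition of $\sum_{n}{\rm HF}(R/I,n)t^n$ and $\sum_{n}{\rm HF}(I,n)t^n$ is legitimate and produces $\sum_{n}\bigl({\rm HF}(R/I,n)+{\rm HF}(I,n)\bigr)t^n={\rm HS}(R,t)$. Rearranging gives ${\rm HS}(R/I,t)={\rm HS}(R,t)-{\rm HS}(I,t)$.

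I do not expect a genuine obstacle here; the entire content sits in the degree-wise rank-nullity step, which is already available from Theorem~\ref{rank-nullity Hilbert}. The only points that need care are (i) checking that $R/I$ really is a graded module whose $n$-th piece is $R_n/I_n$, which is where Lemma~\ref{L:characterization_graded_ideal} and the direct-sum decomposition of $R/I$ enter, and (ii) reading the conclusion as an identity of formal power series, so that no convergence question arises. If one wanted to drop finite-dimensionality, the short exact sequence still forces ${\rm HF}(R,n)=\infty$ exactly when ${\rm HF}(I,n)=\infty$ or ${\rm HF}(R/I,n)=\infty$, but in the present setting every graded piece is finite-dimensional, so this refinement is unnecessary.
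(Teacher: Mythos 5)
Your proposal is correct and follows essentially the same route as the paper: apply the degree-wise rank--nullity identity ${\rm HF}(R,n)={\rm HF}(R/I,n)+{\rm HF}(I,n)$ from Theorem~\ref{rank-nullity Hilbert} and sum over all $n$ to obtain the identity of formal power series. The extra care you take in identifying $(R/I)_n\cong R_n/I_n$ and in noting that no convergence issue arises is a welcome elaboration but does not change the argument.
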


\begin{proof}
Theorem \ref{rank-nullity Hilbert} implies that ${\rm HF}(R/I,n)={\rm HF}(R,n)-{\rm HF}(I,n)$ and by summing over all values of $n$ the theorem follows.
\end{proof}
In other words, for computing the dimension of $R_n/I_n$, we count the number of monomials in $R_n$ and we subtract the number of monomials spanning $I_n$; this is because the monomials spanning $R_n$ form a basis for  $R_n$ as a vector space over $k$. Similarly the monomials spanning  $I_n$ form a basis for  $I_n$ as a vector space over $k$. \\ To build on this result we need the following notation for the Hilbert function of a module $M$ shifted by degree $d$
\begin{center}
${\rm HF}\{M(-d)\}:={\rm HF}(M,t-d)$.
\end{center}

\begin{lemma}\label{HF for principal ideal}
A principal ideal has the Hilbert function of a polynomial ring shifted by the degree of the generator. If $I=\langle p \rangle$, where $p$  is a monomial of degree $n$ in $k[\bar {\rm x}]$ and $\bar {\rm x}$ represents the a-tuple $(x_1,x_2,x_3,...,x_a)$ then 
\begin{center}
${\rm HF}(I,t)={\rm HF}\{k[\bar  {\rm x}](-n)\}$.
\end{center}
\end{lemma}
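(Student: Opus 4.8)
The plan is to exhibit an explicit graded $R$-module isomorphism between $k[\bar{\rm x}](-n)$ and the principal ideal $I=\langle p\rangle$, and then invoke the fact that isomorphic graded modules have identical Hilbert functions. Concretely, I would consider the multiplication-by-$p$ map $\mu_p\colon k[\bar{\rm x}]\to I$ given by $\mu_p(f)=pf$. This is an $R$-module homomorphism, and it is surjective essentially by definition of the principal ideal $\langle p\rangle$, since every element of $I$ is of the form $pf$ for some $f\in R$.

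Next I would argue injectivity: because $R=k[\bar{\rm x}]$ is an integral domain and $p\neq 0$, the equality $pf=0$ forces $f=0$, so $\ker\mu_p=0$. The remaining point is to track degrees. Since $p$ is homogeneous of degree $n$, if $f$ is homogeneous of degree $d$ then $pf$ is homogeneous of degree $d+n$; hence $\mu_p$ carries the degree-$d$ component of $R$ bijectively onto the degree-$(d+n)$ component of $I$. In the language of the regrading $M(-d)$ introduced just before the short exact sequence, this says precisely that $\mu_p$ is a degree-preserving isomorphism of graded modules $k[\bar{\rm x}](-n)\to I$, which immediately yields ${\rm HF}(I,t)={\rm HF}(k[\bar{\rm x}](-n),t)={\rm HF}(k[\bar{\rm x}],t-n)={\rm HF}\{k[\bar{\rm x}](-n)\}$.

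Alternatively --- and this is the more combinatorial route, consistent with the ``count the monomials'' remark following the Hilbert series theorem --- one can bypass the module language and simply observe that a monomial $m$ lies in $I=\langle p\rangle$ if and only if $p\mid m$, i.e.\ $m=pm'$ for a \emph{unique} monomial $m'$. Restricting to degree $t$, the assignment $m'\mapsto pm'$ is a bijection between monomials of degree $t-n$ in $k[\bar{\rm x}]$ and monomials of degree $t$ in $I$. Since $I$ is a monomial ideal, the degree-$t$ monomials of $I$ form a $k$-basis of $I_t$, and the degree-$(t-n)$ monomials form a $k$-basis of $R_{t-n}$; comparing cardinalities gives the same identity, including the degenerate case $t<n$ where both sides vanish.

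The step I expect to require the most care is not any single computation but the bookkeeping around the grading shift: one must verify that $\mu_p$ (or the monomial bijection) really respects the grading with exactly the shift $-n$, and that the notation ${\rm HF}\{M(-d)\}:={\rm HF}(M,t-d)$ is applied with the correct sign, so that the conclusion is a genuine equality of functions of $t$ rather than one off by $\deg(p)$. Everything else --- surjectivity, injectivity via the integral domain property, and the linear independence of monomials --- is routine.
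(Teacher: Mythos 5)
Your proposal is correct, and in fact contains the paper's own argument as its ``alternative'' route: the paper proves the lemma exactly by the monomial count, observing that the degree-$t$ monomials of $I$ are precisely the products $f\cdot p$ with $f$ a monomial of degree $t-n$, so their number equals the number of degree-$(t-n)$ monomials of $k[\bar{\rm x}]$. Your primary route --- the multiplication-by-$p$ map $\mu_p\colon k[\bar{\rm x}](-n)\to I$, surjective by definition of a principal ideal and injective because $k[\bar{\rm x}]$ is an integral domain --- is a genuinely different and somewhat stronger argument: it establishes a degree-preserving isomorphism of graded modules rather than merely an equality of dimensions, and it applies verbatim to any nonzero \emph{homogeneous} generator $p$, not only to monomials, whereas the paper's basis-of-monomials argument leans on $I$ being a monomial ideal. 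The paper's counting proof, on the other hand, is more elementary and fits the combinatorial spirit of the surrounding sections (it feeds directly into the Pascal-table formula in equation~(\ref{principal ideal Pascal})). Your closing caution about the sign convention in ${\rm HF}\{M(-d)\}:={\rm HF}(M,t-d)$ is well placed; with that convention both of your arguments land on the stated identity, including the degenerate range $t<n$ where both sides are zero.
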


\begin{proof}
By definition ${\rm HF(I,t)}$ is the dimension of the vector space spanned by all polynomials in $I$ of uniform degree $t$. A basis for such a vector space can be chosen to be all monomials in $I$ of degree $t$. These are of the form $f \cdot p $, where $f$ is a monomial of ${\rm deg}(f)=t-{\rm deg}(p)$ so there are as many such monomials as there are of degree $t-n$ in $k[\bar  {\rm x}](-n)$.
\end{proof}

Before working through our first example, it would be helpful to refer the following corollary to our last lemma.

\begin{corollary}\label{corollary to HF for principal ideal}
For a principal ideal $I=\langle p \rangle$ we have that 
\begin{center}
${\rm HF}(R/I,t)={\rm HF}(R,t)-{\rm HF}(R(- {\rm deg}(p)),t)$ 
\end{center}
\end{corollary}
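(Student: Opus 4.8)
The plan is to combine the rank--nullity identity for Hilbert functions with the explicit formula for the Hilbert function of a principal monomial ideal. First I would apply Theorem~\ref{rank-nullity Hilbert} to the monomial ideal $I=\langle p\rangle$ inside $R=k[x_1,x_2,\dots,x_a]$; this yields ${\rm HF}(R,t)={\rm HF}(R/I,t)+{\rm HF}(I,t)$. Solving for the quotient term gives ${\rm HF}(R/I,t)={\rm HF}(R,t)-{\rm HF}(I,t)$, so the whole corollary reduces to computing ${\rm HF}(I,t)$.

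Next I would invoke Lemma~\ref{HF for principal ideal}. Since $p$ is a monomial of degree $n=\deg(p)$ in $k[\bar{\rm x}]$, that lemma gives ${\rm HF}(I,t)={\rm HF}\{k[\bar{\rm x}](-n)\}$. Because $R=k[\bar{\rm x}]$ is precisely the ambient polynomial ring, unwinding the shorthand ${\rm HF}\{M(-d)\}:={\rm HF}(M,t-d)$ turns this into ${\rm HF}(I,t)={\rm HF}(R(-\deg(p)),t)$. Substituting this into the identity from the previous paragraph produces exactly the asserted equation ${\rm HF}(R/I,t)={\rm HF}(R,t)-{\rm HF}(R(-\deg(p)),t)$.

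The only point requiring a little care — and it is bookkeeping rather than a genuine obstacle — is making sure the regrading $R(-n)$ appearing in the statement is the same shift that appears in Lemma~\ref{HF for principal ideal}. This amounts to checking that multiplication by $p$ identifies the degree-$t$ monomial basis of $I$ with the degree-$(t-n)$ monomial basis of $R$, which is precisely what the proof of that lemma establishes. Once the conventions are aligned, the corollary follows by a one-line substitution, so I would expect the argument to be short and the main ``work'' to consist only of citing Theorem~\ref{rank-nullity Hilbert} and Lemma~\ref{HF for principal ideal} in the right order.
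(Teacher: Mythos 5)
Your proposal is correct and matches the paper's own proof, which is the same one-line argument: apply Theorem~\ref{rank-nullity Hilbert} to write ${\rm HF}(R/I,t)={\rm HF}(R,t)-{\rm HF}(I,t)$ and then substitute the value of ${\rm HF}(I,t)$ given by Lemma~\ref{HF for principal ideal}. The extra care you take in aligning the shift conventions is exactly the content of that lemma, so nothing further is needed.
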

\begin{proof}
Apply the above lemma to the Theorem \ref{rank-nullity Hilbert}
\end{proof}

\begin{example} 
Find the Hilbert function of  $M=k[x,y,z]/\langle x^5 \rangle$.
\end{example}

Let $R=k[x,y,z]$. By Corollary \ref{corollary to HF for principal ideal}, the Hilbert function of the module $M$ can be written as 
\begin{center}
${\rm HF}(M,t)={\rm HF}(R,t)-{\rm HF}(R(- {\rm deg}(x^5)),t)={\rm HF}(R,t)-{\rm HF}(R(- 5),t)$.
\end{center} 
Therefore,
\begin{center}
\begin{tabular}{ l | l | l }
${\rm HF}\{R\}$ & $-{\rm HF}\{R\} (-5)\}$ & ${\rm HF}\{M\}$\\
\hline
 ~1 & ~~~~~0 &~~~ $\bf 1$ \\
\hline
 ~3 & ~~~~~0 &~~~ $\bf 3$\\
\hline
 ~6 & ~~~~~0 &~~~ $\bf 6$ \\
\hline
 10 & ~~~~~0 &~~ $\bf 10$ \\
\hline
 15 & ~~~~~0 &~~ $\bf 15$ \\
\hline
 21 & ~~~~-1 &~~ $\bf 20$ \\
\hline
 28 & ~~~~-3 &~~ $\bf 25$ \\
\hline
 36 & ~~~~-6 &~~ $\bf 30$ \\
\hline
 45 & ~~~-10 &~~ $\bf 35$ \\
\hline
 55 & ~~~-15 &~~ $\bf 40$ \\
\hline
 ~.. & ~~~~.. &~~~~~.. \\
\hline
 ~.. & ~~~~..  &~~~~~.. \\
\end{tabular}
\end{center}

%%%%%%%%%%%%%%%%%%%%%%%%%%%%%%%%%%%%%%%%%%
Regardless of our approach to the Hilbert function of polynomial quotient rings, it is clear that computing the Hilbert function of rings of the form $k[x_1,x_2,...,x_a]$ is essential.

\section{Hilbert Function tables, motivating applications and examples}

We study Hilbert functions by placing them into families.  The simplest such family will be the Hilbert functions corresponding to the indexed set $\{k[x_1, x_2, \ldots, x_a]\, : \, a \geq 1\}$.  Then we generalize the idea of the Pascal table to construct the Hilbert Function tables.  To motivate this generalization, we use the Stanley--Reisner ring of a complex which we gradually build in a form that is analogous to the way the corresponding Hilbert Function table would be generated.  Finally, one must address the difficulties of generating a row of the Hilbert Function table which involves the introduction of one or more monomials in the ideal being used for the quotient ring corresponding to that row.  We illustrate the difficulties at the end of this section and develop a different method of solving this problem in each of the next two sections.

\subsection{Pascal Table and more general Hilbert Function Tables}
Consider  the indexed set $\{k[x_1, x_2, \ldots, x_a]\, : \, a \geq 1\}$ of polynomial rings.  We use the index value $a$ to determine the row and the degree $b$ of the monomials being counted to determine the column in the table below.
\begin{center}
\small{
\begin{tabular}{ l | l l l l l l l l l}
 HF of $k[x_1]$ & 1 & 1 & 1 & 1 & 1 & 1 & 1 & 1 &...\\ [-3pt]
\hline
 HF of $k[x_1,x_2]$ & 1 & 2& 3 & 4 & 5 & 6 & 7 & 8 &... \\[-3pt]
\hline
 HF of $k[x_1,x_2,x_3]$ & 1 & 3 & 6 & 10 & 15 & 21 & 28 & 36 &...\\[-3pt]
\hline
 HF of $k[x_1,x_2,x_3,x_4]$ & 1 & 4 & 10 & 20 & 35 & 56 & 84 & 120&... \\[-3pt]
\hline
 HF of $k[x_1,x_2,x_3,x_4,x_5]$ & 1 & 5 & 15 & 35 & 70 & 126 & 210 & 330&... \\[-3pt]
\hline
 HF of $k[x_1,x_2,x_3,x_4,x_5,x_6]$ & 1 & 6 & 21 & 56 & 126 & 252 & 462 & 792&... \\[-3pt]
\hline
 HF of $k[x_1,x_2,x_3,x_4,x_5,x_6,x_7]$ & 1 & 7 & 28 & 84 & 210 & 462 & 924 & 1716&... \\[-3pt]
\hline
 HF of $k[x_1,x_2,x_3,x_4,x_5,x_6,x_7,x_8]$ & 1 & 8 & 36 & 120 & 330 & 792 & 1716 & 3432&... \\[-3pt]
\hline
 . & . & . & . & . & . & . & . & .&... \\[-3pt]
%\hline
 %. & . & . & . & . & . & . & . & .&... 
\end{tabular}}
\end{center}

The reader would have undoubtedly noticed that the number patterns displayed in the above table are  those of the Pascal triangle.  For this reason, we refer to the above table as the Pascal table.  These numerical patterns lead us to the following proposition. 
\begin{proposition}\label{Pascal_table_recurrence_formula}
$F(a,b)= F(a-1,b)+F(a,b-1)$, where $F(a,b)$ denotes the number of monomials of degree $b$ in $k[x_1,x_2,...,x_a]$.
\end{proposition}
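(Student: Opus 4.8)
The plan is to establish the recurrence $F(a,b) = F(a-1,b) + F(a,b-1)$ by constructing an explicit bijection between the monomials of degree $b$ in $k[x_1,\dots,x_a]$ and the disjoint union of two sets: the monomials of degree $b$ in $k[x_1,\dots,x_{a-1}]$, and the monomials of degree $b-1$ in $k[x_1,\dots,x_a]$. The combinatorial key is to partition the monomials counted by $F(a,b)$ according to whether or not the variable $x_a$ divides them.

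First I would fix $a \geq 2$ and $b \geq 1$ (the boundary cases $a=1$ or $b=0$ being trivial, since $F(1,b)=1$ and $F(a,0)=1$ for all $a$), and let $\mathcal{M}(a,b)$ denote the set of monomials of degree $b$ in $k[x_1,\dots,x_a]$, so that $F(a,b) = |\mathcal{M}(a,b)|$. Every monomial $m \in \mathcal{M}(a,b)$ can be written uniquely as $m = x_1^{e_1}\cdots x_a^{e_a}$ with $e_1 + \cdots + e_a = b$. I would split $\mathcal{M}(a,b)$ into the subset $\mathcal{A}$ of those monomials with $e_a = 0$, and the subset $\mathcal{B}$ of those with $e_a \geq 1$. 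These two subsets are clearly disjoint and their union is all of $\mathcal{M}(a,b)$, so $F(a,b) = |\mathcal{A}| + |\mathcal{B}|$.

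Next I would identify each piece. A monomial in $\mathcal{A}$ has the form $x_1^{e_1}\cdots x_{a-1}^{e_{a-1}}$ with $e_1 + \cdots + e_{a-1} = b$, so $\mathcal{A}$ is in obvious bijection with $\mathcal{M}(a-1,b)$, giving $|\mathcal{A}| = F(a-1,b)$. For $\mathcal{B}$, the map $m \mapsto m/x_a$ is a bijection from $\mathcal{B}$ onto $\mathcal{M}(a,b-1)$: it is well-defined because $x_a \mid m$ and the quotient has degree $b-1$, and its inverse is multiplication by $x_a$. Hence $|\mathcal{B}| = F(a,b-1)$. Combining these two identifications with the count from the previous paragraph yields $F(a,b) = F(a-1,b) + F(a,b-1)$, as desired.

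I do not anticipate a serious obstacle here; the argument is a standard stars-and-bars style partition and the only care needed is in handling the boundary values correctly and confirming that the two maps are genuine bijections (well-definedness, injectivity, surjectivity), which is routine. One could alternatively phrase the entire proof in terms of the closed form $F(a,b) = \binom{a+b-1}{b}$ together with Pascal's identity for binomial coefficients, but the bijective argument above is cleaner and makes the combinatorial content of the Pascal table transparent, so that is the route I would take.
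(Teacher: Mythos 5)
Your proof is correct and follows essentially the same route as the paper: partition the degree-$b$ monomials according to whether $x_a$ divides them, identify the first block with $\mathcal{M}(a-1,b)$ and the second with $\mathcal{M}(a,b-1)$ via division by $x_a$. No substantive difference.
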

\begin{proof}
Let $S$ be the set of monomials in $k[x_1,x_2,....,x_a]$ of degree $b$. Then $S$ can be written as the union of the set $S_1$ of monomials of degree $b$ in the variables $x_1,x_2,....,x_{a-1}$ and a set $S_2$ disjoint from $S_1$. Observe $|S_1|=F(a-1,b)$. Now consider any element of $S_2$. Notice that such an element has a factor $x_a$. So if $p(\bar {{\rm x}}) \in S_2$, then there is a unique $\hat {p}(\bar {{\rm x}})$ such that $p(\bar{{\rm x}})=\hat{p}(\bar{{\rm x}})\cdot x_a$ and $\rm {deg(\hat{p}(\bar{{\rm x}}))}=b-1$. On the other hand, if $\hat {q}(x) \in k[x_1,x_2,....,x_a]$ and has degree $b-1$ then $(\hat {q}(x)) \cdot x_a \in S_2$. Therefore, there is a bijection from the set of monomials of degree $b-1$ in $k[x_1,x_2,....,x_a]$ to the set $S_2$. Consequently, $|S|=F(a-1,b)+F(a,b-1)$.
\end{proof}

Now we prove by induction that each element of the table is given by the following proposition.  Please be aware that the row count starts with $1$ but the column count starts with zero.  This is because the row count matches the number of variables used and the column count corresponds to the constant degree of the set of monomials being counted. 
\begin{proposition}\label{Pascal_table_combinatorial_formula}
$F(a,b)=\frac{(a-1+b)!}{(a-1)!b!}$, where $F(a,b)$, $a \geq 1, b \geq 0$, denotes the entry that lies in the $a^{\rm th}$ row and the $b^{\rm th}$ column of the Pascal table.  
\end{proposition}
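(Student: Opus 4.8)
The plan is to prove the closed form by induction, leveraging the recurrence already established in Proposition \ref{Pascal_table_recurrence_formula}. Concretely, I would induct on the sum $a+b$. For the base cases I would handle the first row and the zeroth column separately: when $a=1$, the only monomial of degree $b$ in $k[x_1]$ is $x_1^b$, so $F(1,b)=1=\frac{(1-1+b)!}{(1-1)!\,b!}$; when $b=0$, the only monomial of degree $0$ is the constant $1$, so $F(a,0)=1=\frac{(a-1)!}{(a-1)!\,0!}$. Together these cover every pair $(a,b)$ with $a=1$ or $b=0$, and in particular the smallest case $a+b=1$.

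For the inductive step, assume $a\geq 2$ and $b\geq 1$ and that the formula holds for all pairs with a strictly smaller value of $a+b$; in particular it holds for $(a-1,b)$ and for $(a,b-1)$, both of which still satisfy $a\geq 1$, $b\geq 0$ and hence lie in the table. Applying Proposition \ref{Pascal_table_recurrence_formula} and then the inductive hypothesis gives
\[
F(a,b)=F(a-1,b)+F(a,b-1)=\frac{(a-2+b)!}{(a-2)!\,b!}+\frac{(a-1+b-1)!}{(a-1)!\,(b-1)!}.
\]
It then remains to verify the purely arithmetic identity that the right-hand side equals $\frac{(a-1+b)!}{(a-1)!\,b!}$; this is exactly Pascal's rule $\binom{a+b-2}{b}+\binom{a+b-2}{b-1}=\binom{a+b-1}{b}$ written in factorial form, and it follows by putting the two fractions over the common denominator $(a-1)!\,b!$ and factoring out $(a+b-2)!$.

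I do not expect a serious obstacle here; the only thing requiring care is making sure the induction is well-founded, since the recurrence decreases either $a$ or $b$ but never both — inducting on $a+b$ with the two-part base case ($a=1$ or $b=0$) is the clean way to organize this. As an alternative worth noting, one can instead give a direct bijective (``stars and bars'') argument: a monomial $x_1^{e_1}\cdots x_a^{e_a}$ of degree $b$ corresponds to a weak composition $(e_1,\dots,e_a)$ of $b$ into $a$ nonnegative parts, and the number of such compositions is the number of ways to place $a-1$ dividers among $b$ units, namely $\binom{a-1+b}{b}=\frac{(a-1+b)!}{(a-1)!\,b!}$. This bypasses the recurrence entirely, but the inductive route is the most natural here given that Proposition \ref{Pascal_table_recurrence_formula} is already in hand.
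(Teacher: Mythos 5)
Your proof is correct and follows essentially the same route as the paper: the same base cases ($a=1$ and $b=0$), the same application of Proposition \ref{Pascal_table_recurrence_formula}, and the same factorial simplification. Your only refinement is making the induction explicitly well-founded by inducting on $a+b$, a point the paper glosses over.
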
 
\begin{proof}
We have that $F(1,b)$ is the number of monomials of degree $b$ in a single variable. Since $x_1 ^{b}$ is the only monomial in $K[x_1]$ of degree $b$ then $F(1,b)=1$ for all $b\geq 0$. Also $F(a,0)=1$ for all $a\geq 1$ because in the ring $k[x_1,x_2,....,x_a]$ there is only one monomial of degree zero which is $x_1^{0}\cdot x_2^{0} \cdot ....\cdot x_a^{0}$.\\
{\bf Inductive Step}\\
Suppose $a>1$ and $b>0$. Then given that 
$$F(a-1,b)=\frac{(a-1+b-1)!}{(a-2)!b!}$$ and $$F(a,b-1) = \frac{(a-1+b-1)!}{(a-1)!(b-1)!}$$ 
we have 
\vspace{-2mm} 
\begin{eqnarray}
F(a,b)&=&F(a-1,b)+F(a,b-1) \nonumber \\
   & = &\frac{(a-1+b-1)!}{(a-2)!b!}+\frac{(a-1+b-1)!}{(a-1)!(b-1)!} \nonumber \\
   & = & \frac{(a-1+b)\cdot (a-2+b)!}{(a-1)!b!}\nonumber \\
   & = &\frac{(a-1+b)!}{(a-1)!b!}.\nonumber \
\end{eqnarray}
\end{proof}

Both meanings assigned to $F(a,b)$ are equivalent.  Thus, for example, we can say that by choosing $a=2$, we regard $F(2,b)$ as the value in the $2^{\rm nd}$ row and $b^{\rm th}$ column of the table or the number of monomials of degree $b$ that can be written with two distinct variables.  Also observe that the  proposition \ref{Pascal_table_combinatorial_formula} together with corollary \ref{corollary to HF for principal ideal} give a concrete formula for the Hilbert function of a principal ideal. So for $R=k[\bar {\rm x}]$ and $p \in R$ we can write
\begin{equation}\label{principal ideal Pascal}
{\rm HF}(R/I,b)= {\rm F}(a,b)-{\rm F}(a,b-{\rm deg}(p))=\frac{(a-1+b)!}{(a-1)!b!}-\frac{(a-1+b-\deg (p))!}{(a-1)!(b-\deg (p))!}.
\end{equation}

Proposition \ref{Pascal_table_recurrence_formula} is also valid for generating some rows of more general families of Hilbert functions.  We can prove it using either a counting argument or some homological algebra machinery.  We prefer the latter in order to avoid delicate counting procedures. Moreover, proposition \ref{Pascal_table_recurrence_formula} allows for an inductive construction of other expressions for computing values of the Pascal table. Let us illustrate this by expressing $F(a,b)$ in terms of the ascending factorial $[a]^{n}=a\cdot(a+1)\cdot(a+2)\cdot......\cdot(a+n-1)$
with the convention $[a]^{0}=1$.

\begin{proposition}\label{combinatorial_sum_formula}
The Hilbert function $F(a,b)$ defined as above it can be computed by either one of the following formulas
$$F(a,b)=\displaystyle\sum_{i=0}^{a-1} \frac{1}{i!} [b]^i
\quad \text{ or } \quad  
F(a,b)=\displaystyle\sum_{j=0}^{b} \frac{1}{j!} [a-1]^j.$$
\end{proposition}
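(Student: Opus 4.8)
The plan is to prove the first identity by induction on $a$ (with $b$ arbitrary), using only the recurrence of Proposition~\ref{Pascal_table_recurrence_formula} and the base cases recorded in the proof of Proposition~\ref{Pascal_table_combinatorial_formula}, and then to deduce the second identity from the symmetry of the closed form. For the base case $a=1$, the sum $\sum_{i=0}^{a-1}\frac{1}{i!}[b]^i$ collapses to its single term $[b]^0 = 1 = F(1,b)$. For the inductive step, fix $a>1$ and assume $F(a-1,c)=\sum_{i=0}^{a-2}\frac{1}{i!}[c]^i$ for all $c\geq 0$. Rearranging Proposition~\ref{Pascal_table_recurrence_formula} as $F(a,c)-F(a,c-1)=F(a-1,c)$ and telescoping in $c$ gives
\[
F(a,b)=F(a,0)+\sum_{c=1}^{b}\bigl(F(a,c)-F(a,c-1)\bigr)=1+\sum_{c=1}^{b}F(a-1,c),
\]
so after substituting the inductive hypothesis the task reduces to evaluating $\sum_{c=1}^{b}[c]^i$ for each fixed $i$.

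The crux of the argument is the discrete antidifference identity
\[
[c]^{\,i+1}-[c-1]^{\,i+1}=(i+1)\,[c]^{\,i},
\]
which follows by factoring the common rising product $c(c+1)\cdots(c+i-1)$ out of both terms on the left and observing that $(c+i)-(c-1)=i+1$. Summing over $c=1,\dots,b$ telescopes to $\sum_{c=1}^{b}[c]^i=\tfrac{1}{i+1}\bigl([b]^{\,i+1}-[0]^{\,i+1}\bigr)=\tfrac{1}{i+1}[b]^{\,i+1}$, since $[0]^{\,i+1}=0$ for every $i\geq 0$. Feeding this back in,
\[
F(a,b)=1+\sum_{i=0}^{a-2}\frac{1}{i!}\cdot\frac{[b]^{\,i+1}}{i+1}=1+\sum_{j=1}^{a-1}\frac{1}{j!}[b]^{\,j}=\sum_{j=0}^{a-1}\frac{1}{j!}[b]^{\,j},
\]
which closes the induction and establishes the first formula.

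For the second formula I would invoke the closed form $F(a,b)=\tfrac{(a-1+b)!}{(a-1)!\,b!}$ from Proposition~\ref{Pascal_table_combinatorial_formula}, which is manifestly unchanged under the exchange $(a-1)\leftrightarrow b$; hence $F(a,b)=F(b+1,a-1)$. Applying the first formula (already proved) to $F(b+1,a-1)$ gives $F(a,b)=\sum_{i=0}^{b}\frac{1}{i!}[a-1]^i$, as claimed. One could instead rerun the induction symmetrically, this time on $b$, telescoping the recurrence in the other direction as $F(a,b)=F(1,b)+\sum_{a'=2}^{a}F(a',b-1)$ and reusing the same antidifference identity summed in the variable $a'$; I expect the only delicate point there to be keeping the index shifts straight so that $[a-1]$ rather than $[a-2]$ appears, and this is sidestepped completely by the symmetry argument.

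The main obstacle I anticipate is essentially just the rising-factorial telescoping step — the ``discrete fundamental theorem of calculus'' — together with the attendant bookkeeping of reindexing $i\mapsto i+1$; once that identity is in place, both formulas fall out immediately. In the write-up I would present the symmetry-based derivation of the second formula to keep things short, and mention the direct symmetric induction only as a remark.
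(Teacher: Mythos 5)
Your proof is correct, but your inductive step for the first formula takes a genuinely different route from the paper's. The paper also inducts on $a$, but it disposes of the term $F(a,b-1)$ in the recurrence by substituting the closed form $\frac{(a-1+b-1)!}{(a-1)!(b-1)!}$ from Proposition~\ref{Pascal_table_combinatorial_formula} and recognizing it on sight as the missing top term $\frac{1}{(a-1)!}[b]^{a-1}$ of the sum --- a one-line identification. You instead telescope the recurrence in the degree variable to get $F(a,b)=1+\sum_{c=1}^{b}F(a-1,c)$ and then evaluate $\sum_{c=1}^{b}[c]^i$ via the antidifference identity $[c]^{i+1}-[c-1]^{i+1}=(i+1)[c]^i$. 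Your version is longer and requires the extra rising-factorial lemma (which you state and justify correctly, including $[0]^{i+1}=0$), but it has the merit of proving the first formula from the recurrence alone, with no appeal to the binomial closed form; the paper's version is shorter but leans on Proposition~\ref{Pascal_table_combinatorial_formula} already in the inductive step. For the second formula you and the paper do exactly the same thing: invoke the symmetry of $\frac{(a-1+b)!}{(a-1)!\,b!}$ under $(a-1)\leftrightarrow b$, so both arguments ultimately depend on the closed form there. Your index bookkeeping ($\frac{1}{i!(i+1)}=\frac{1}{(i+1)!}$, reindexing $j=i+1$, and the empty-sum check at $b=0$) is sound throughout.
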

\begin{proof}
To prove the first formula we observe that $F(1,b)=1$ for all $b \geq 0$ and this is precisely $F(1,b)=\displaystyle\sum_{i=0}^{1-1} \frac{1}{i!} [b]^i$.\\
We do induction on the first parameter of $F(a,b)$ namely $a \geq 2$.
Suppose 
\begin{center}
$F(a-1,b)=\displaystyle\sum_{i=0}^{a-2} \frac{1}{i!} [b]^i$.
\end{center}
Now we use the result that 
\[
F(a,b)=
  \begin{cases}
   F(a-1,b)+0,  & \text {for}\  b=0 \\
   
  F(a-1,b)+F(a,b-1), & \text {for}\  b>0  
  \end{cases}
\]
Observe that $F(a-1,0)=1$, for all $a\geq 1$.\\
Therefore,
\begin{eqnarray}
  F(a,b)  & = & \displaystyle F(a-1,b)+F(a,b-1) \nonumber \\
   & = & \displaystyle\sum_{i=0}^{a-2} \frac{1}{i!} [b]^i + \frac{(a-1+b-1)!}{(a-1)!(b-1)!}\nonumber \\
   & = & \displaystyle\sum_{i=0}^{a-2} \frac{1}{i!} [b]^i + \frac{1}{(a-1)!} \cdot (b\cdot (b+1)\cdot.......\cdot (b+a-2))\nonumber \\
   & = & \displaystyle\sum_{i=0}^{a-2} \frac{1}{i!} [b]^i +  \frac{1}{(a-1)!} \cdot [b]^{(a-1)}\nonumber\\
   & = &\displaystyle\sum_{i=0}^{a-1} \frac{1}{i!} [b]^i.\nonumber
\end{eqnarray}
The second formula follows immediately from the first formula since the left hand side is invariant when variables $a-1$ is interchanged with $b$. Therefore, we have that 
\begin{center}
$F(a,b)=\displaystyle\sum_{j=0}^{b} \frac{1}{j!} [a-1]^j$.
\end{center}
\end{proof}
As an example, take the graded module $k[x_1,x_2,x_3]$ then
$F(3,b)=[b]^0 + \frac{1}{1!}[b]^1 + \frac{1}{2!}[b]^2=1+b+ \frac{1}{2} (b^2 + b)$, where $b=0,1,2,....$ Now we proceed to create a more robust version to compute the Hilbert function of a quotient ring by introducing the meaning of the Hilbert function table.

\begin{definition}
A Hilbert function table associated to a quotient ring $k[x_1,x_2,...,x_d]/I$, where $I$ is a monomial ideal in $k[x_1,x_2,...,x_d]$ is an array whose entry indexed by $(a,b)$ is the value of ${\rm HF}(k[x_1,x_2,...,x_a]/I_a,b)$ , where $I_a$ is the ideal generated by the generators of $I$ that involve only the set of variables $\{x_1,x_2,...,x_a\}$.
\end{definition}

As a result of the above definition, the Pascal table is a Hilbert function table for graded modules of the form $k[\bar {\rm x}]$, where $\bar {\rm x}=(x_1,x_2,....,x_a)$ and $a \in  \mathbb{R}^{>0}$.

We can also observe that if $a \geq d$ then $I_a=I$. Moreover, the order of the variables $x_1,x_2,...,x_d$ will affect the Hilbert function table. In fact, two different Hilbert function tables for the same quotient ring need not have the same rows for $1 \leq a < d$. This is because altering the order of $x_1,x_2,...,x_d$ will alter the sequence of ideals $I_1,I_2,...I_{d-1}$. However, two Hilbert function tables for $k[x_1,x_2,...,x_a]/I$ will agree in rows $d$ and higher because $I_a=I$ for $a \geq d$.  After the $d^{\rm th}$ row, every new variable does not introduce a new monomial in the ideal. Therefore, producing the rows after the $d^{\rm th}$ row is a straightforward application of the following result.

\begin{theorem}\label{no_monomial_added_to_ideal}
Given $ {\rm HF}(j,b)$ the Hilbert Function of
 $k[x_1,x_2,....,x_d,x_{d+1},....,x_{d+j}]/I$, with  $j > 0 $,
where $I$ is a monomial ideal of the fixed set of variables $\{x_1,.....,x_d \}$ for $b \geq 0$ we have that $ {\rm HF}(j,b)= {\rm HF}(j-1,b) +  {\rm HF}(j,b-1)$.
\end{theorem}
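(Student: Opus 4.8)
The plan is to exhibit the recurrence as the Hilbert-function shadow of the short exact sequence (the one stated in the Lemma preceding the Hilbert Syzygy Theorem) applied to multiplication by the ``new'' variable $x_{d+j}$. Write $R^{(j)}=k[x_1,\dots,x_d,x_{d+1},\dots,x_{d+j}]$ and let $M^{(j)}=R^{(j)}/I R^{(j)}$, where $I R^{(j)}$ is the ideal of $R^{(j)}$ generated by the (monomial) generators of $I$; since those generators involve only $x_1,\dots,x_d$, this agrees with the $(d+j)$-th ideal in the Hilbert function table, so $\mathrm{HF}(j,b)=\mathrm{HF}(M^{(j)},b)$, and $M^{(j)}$ is a graded module by the discussion in the Introduction. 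Taking $x_n=x_{d+j}$, which has degree $d=1$, in that Lemma yields the degree-preserving exact sequence
\[
0 \rightarrow (0:x_{d+j})(-1) \rightarrow M^{(j)}(-1) \overset{x_{d+j}}{\longrightarrow} M^{(j)} \overset{\phi}{\longrightarrow} M^{(j)}/x_{d+j}M^{(j)} \rightarrow 0 .
\]

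First I would show that multiplication by $x_{d+j}$ is injective on $M^{(j)}$, i.e. that $(0:x_{d+j})=0$; this is the only place the hypothesis ``$I$ involves only $x_1,\dots,x_d$'' is used. If $x_{d+j}\cdot f\in I R^{(j)}$ for some $f\in R^{(j)}$, then every monomial appearing in $x_{d+j}f$ is divisible by one of the monomial generators of $I$; since none of those generators involves $x_{d+j}$, cancelling the common factor $x_{d+j}$ shows every monomial of $f$ lies in $I R^{(j)}$, hence $f\in I R^{(j)}$, i.e. $f=0$ in $M^{(j)}$. I expect verifying this non-zero-divisor property to be the main (essentially the only) obstacle; the rest is bookkeeping. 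With $(0:x_{d+j})=0$ the sequence collapses to $0\rightarrow M^{(j)}(-1)\overset{x_{d+j}}{\longrightarrow} M^{(j)}\rightarrow M^{(j)}/x_{d+j}M^{(j)}\rightarrow 0$.

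Next I would identify the cokernel: setting $x_{d+j}=0$ gives $R^{(j)}/x_{d+j}R^{(j)}\cong k[x_1,\dots,x_{d+j-1}]=R^{(j-1)}$, and because $I$ does not involve $x_{d+j}$ this isomorphism carries $I R^{(j)}$ onto $I R^{(j-1)}$; hence there is a degree-preserving isomorphism $M^{(j)}/x_{d+j}M^{(j)}\cong R^{(j-1)}/I R^{(j-1)}=M^{(j-1)}$. Finally, applying to the degree-$b$ piece of this short exact sequence the additivity of $\dim_k$ over exact sequences of $k$-vector spaces — which is exactly the ``rank-nullity'' computation of Theorem \ref{rank-nullity Hilbert} — and using the convention $\mathrm{HF}\{M(-1)\}=\mathrm{HF}(M,t-1)$, we obtain
\[
\mathrm{HF}(M^{(j)},b)=\mathrm{HF}(M^{(j)}(-1),b)+\mathrm{HF}(M^{(j-1)},b)=\mathrm{HF}(M^{(j)},b-1)+\mathrm{HF}(M^{(j-1)},b),
\]
which, unwinding the notation, is precisely $\mathrm{HF}(j,b)=\mathrm{HF}(j-1,b)+\mathrm{HF}(j,b-1)$ (the case $b=0$ is included, since then $\mathrm{HF}(M^{(j)},-1)=0$). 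One could instead argue combinatorially, splitting the degree-$b$ monomial basis of $M^{(j)}$ according to whether $x_{d+j}$ divides the representative, exactly as in Proposition \ref{Pascal_table_recurrence_formula}; but the homological route avoids the delicate check that these monomial sets are genuine bases of the relevant quotient spaces.
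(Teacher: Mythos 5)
Your proposal is correct and follows essentially the same route as the paper's proof: the short exact sequence for multiplication by $x_{d+j}$, the identification $M^{(j)}/x_{d+j}M^{(j)}\cong M^{(j-1)}$, vanishing of the annihilator because no generator of $I$ involves the new variable, and rank--nullity on the graded pieces. Your monomial-by-monomial verification that $x_{d+j}$ is a non-zerodivisor is in fact spelled out more carefully than the paper's one-line justification, but it is the same argument.
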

\begin{proof}
For $j\geq1$, let $M_j=k[x_1,x_2,....x_d,x_{d+1},\dots,x_{d+j}]/I$ and let $z=x_{d+j}$.  We use the short exact sequence  
\begin{equation}\label{E: short exact sequence}
0\longrightarrow (0:z)(-1) \xrightarrow{\rm incl} M(-1) \longrightarrow M \longrightarrow  M/zM \longrightarrow 0
\end{equation}
found in \cite{villarreal2015monomial}.
In this short exact sequence let the term $M=M_j$.  Applying what are commonly known as the $2^{\rm nd}$ and $3^{\rm rd}$ isomorphism theorems or Proposition 2.1 in \cite{atiyah2018introduction}, 
\begin{align}
\begin{split}
zM_j &= z\left(k[x_1,x_2,....x_d,x_{d+1},\dots,x_{d+j}]/I\right)\nonumber\\ 
&\cong z\left(k[x_1,x_2,....x_d,x_{d+1},\dots,x_{d+j}]/(I \cap \langle z\rangle)\right)\nonumber\\
&\cong (z\,k[x_1,x_2,....x_d,x_{d+1},\dots,x_{d+j}] + I)/I\nonumber
\end{split}
\end{align}
thus,
\begin{align}
\begin{split}
M_j/zM_j&=( k[x_1,x_2,....x_d,x_{d+1},\dots,x_{d+j}]/I)/\left(z\, k[x_1,x_2,....x_d,x_{d+1},\dots,x_{d+j}]+I/I \right)\nonumber\\
&\cong  M_{j-1}=k[x_1,x_2,....x_d,x_{d+1},\dots,x_{d+j-1}]/I.\nonumber
\end{split}
\end{align} 
Since $z \notin I$ the only element $x \in M_j$ such that $zx=0$ is $x=0$.  In other words, the annihilator of multiplication by $z$ is zero.  This implies the short exact sequence, $0\longrightarrow (0:z)(-1) \xrightarrow{\rm incl} M_j(-1) \longrightarrow M_j \longrightarrow M_j/zM_j \longrightarrow 0$ and the corresponding alternating sum ${\rm HF}\{M_j/zM_j\}-{\rm HF}\{M_j\}+ {\rm HF}\{M_j(-1)\}=0$.
\end{proof}

\subsection{Motivating example:  the Stanley-Reisner Ring}

The Stanley-Reisner ring is a polynomial quotient ring assigned to a finite simplicial complex.  First, we must bring to the attention of the reader what is meant by a \emph{finite simplicial complex}.

\begin{definition}
A finite simplicial complex $\Delta$ consists of a finite set $V$ of vertices and a collection $\Delta$ of subsets of $V$ called faces such that
\begin{itemize}
\item{(i)}
If $u \in V$, then ${u} \in \Delta$.
\item{(ii)}
If $F \in \Delta$ and $G \subset F$, then $G \in \Delta$.
\end{itemize}
\end{definition}

{\bf Note:} The empty set is a face of every simplex.

Let $\Delta$ be a simplicial complex and let $F$ be a face of $\Delta$. Define the dimensions of F and $\Delta$ by ${\rm dimF}=|F|-1$ and ${\rm dim \Delta}={\rm sup} \{{\rm dim F} | F \in \Delta\}$ respectively. A face of dimension $q$ is called a q-face or a q-simplex.  Associate a distinct variable $x_i$ to each distinct vertex in the set $V$.  If $F$ is a face of $\Delta$ then the product of all corresponding $x_i$ is a square-free monomial associated with $F$.  This is due to the fact that at most one $q$--face can exist for a given $(q+1)$--set of vertices.  The Stanley-Reisner ring can be written in following form:
\begin{center}
 $K[x_1,x_2,.......,x_n]/I $,
\end{center}
where $I$ is an ideal of square free monomials ideal in the variables $x_1,x_2,.......,x_n$ corresponding to the non-face of $\Delta$.  For convenience let us denote the Stanley-Reisner ring associated with $\Delta$ by $k[\Delta]$. This is a standard construction the details of which can be found in page 5 of \cite{miller2004combinatorial}.

By definition, a simplicial complex $\Delta$ is a set theoretic construct but it is often the case we work with its  geometric realization.  That is associate with $\Delta$ a topological space that is a subspace of $\mathbb{R}^{\dim \Delta}$ and it is a union of simplices corresponding to the faces of $\Delta$.
Since $\Delta$ can be written as a disjoint union of its $i$-dimensional components $\Delta = \bigcup_{i=0}^{\dim \Delta}\Delta_i$ consequently the Stanley Reisner ring of  $\Delta$  admits a direct sum decomposition
\begin{center}
$k[\Delta]= \displaystyle \bigoplus _{i=0}^{\dim \Delta} k[\Delta_i]$
\end{center}
whose summands $k[\Delta_i]$ are vector spaces with a basis of monomials (not necessarily square-free) supported on the $i$-dimensional faces of $\Delta$.
%\newpage
\begin{example}
We illustrate how construction of the complex with Stanley--Reisner ring
\begin{center}
$M=k[x,\hat{x},y,z,w]/\langle x\hat{x},yzw \rangle$.
\end{center}
mirrors the generating of the corresponding Hilbert Function table by adding one variable at a time and including all relevant monomials in the ideal used in the quotient.
\end{example}

\begin{figure}[h!]
\centering
\includegraphics[width=0.2\textwidth]{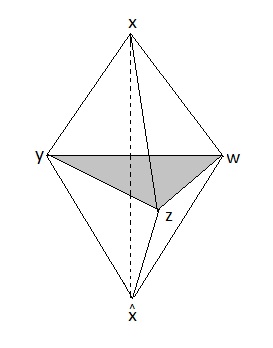}
\caption{4-vertices, 3-edges, 0-faces}
\end{figure}
\newpage
Start with the complex $C_0$ corresponding to the point $x$ we have the polynomial ring $k[x]$. Bringing the next variable $\hat{x}$, we have a new complex $C_1$ corresponding to the points $x,\hat{x}$. So we have $k[x,\hat{x}]/\langle x\hat{x} \rangle$. When the next variable $y$ shows up we have the complex $C_2$ corresponding to the points $x,\hat{x}, y$ and the edges $xy$ and $\hat{x}y$. By the same way, when $z$ shows up we have the complex $C_3$ corresponding to the points  $x,\hat{x}, y, z$, the edges $xy,xz,yz,y\hat{x},z\hat{x}$ and the faces $xyz$ and $y\hat{x}z$. To generate the table below we invoke\\ Theorem \ref{no_monomial_added_to_ideal}\\[12pt]

\begin{center}
\scalebox{1.2}{
\begin{tabular}{ l | l l l l l l l l l}
 ${\rm HF}\{k[x]\}$   & 1 & 1 & 1 & 1 & 1 & 1 & 1 & 1 & ... \\
\hline
${\rm HF} \{k[x,\hat{x}]/\langle x\hat{x} \rangle\}$  & 1 & 2 & 2 & 2 & 2 & 2 & 2 & 2 &...\\
\hline
${\rm HF} \{k[x,\hat{x}, y]/\langle x\hat{x} \rangle\}$  & 1 & 3 & 5 & 7 & 9 & 11 & 13 & 15 &...\\
\hline
${\rm HF} \{k[x,\hat{x}, y,z]/\langle x\hat{x} \rangle\}$  & 1 & 4 & 9 & 16 & 25 & 36 & 49 & 64 &...\\
\end{tabular}}
\end{center}
\vskip 0.5cm

Let $M_1=k[x,\hat{x}, y,z]/\langle x\hat{x} \rangle.$  Then by using the short exact sequence (s.e.s.) for $M$ we have

\begin{center}
\footnotesize{
\begin{tabular}{l*{10}{c}r}
{0} & $\longrightarrow$ & $b_3j$ &  $\xrightarrow {\text {inclusion}}$ & $b_2j$ & $\xrightarrow {{\text{multiply}}\ {\text{by}}\ w}$ & $b_1j$ &  $\longrightarrow$ & $b_0j$ & $\longrightarrow$ & $0$\\
{0} & $\longrightarrow$ & $(0:w)_{M}(-1)$ &  $\xrightarrow {\text {inclusion}}$ & ${M}(-1)$ & $\xrightarrow {{\text{multiply}}\ {\text{by}}\ w}$ & $M$ &  $\longrightarrow$ & $M/w {M} \cong {M_1}$ & $\longrightarrow$ & $0$\\
{0} & &{0} & &{0}  & & $\bf 1$ & & {1} & & {0}\\
{0} & &{0} & & {1} & & $\bf 5$ & &{4} & & {0} \\
{0} & &{0} & &{5}  & & $\bf 14$ & & {9} & & {0}\\
{0} & &{1} & &{14}  & & $\bf 29$ & & {16} & & {0}\\
{0} & &{4} & &{29}  & & $\bf 50$ & & {25} & & {0}\\
{0} & &{9} & &{50}  & & $\bf 77$ & & {36} & & {0}\\
{0} & &{16} & &{77}  & & $\bf 110$ & & {49} & & {0}\\
{0} & &{25} & &{110}  & & $\bf 149$ & & {64} & & {0}\\
{...} & & {...} & & {...} & &{...} & &{...} & & {...} \\
{...} & & {...} & & {...} & & {...} & &{...} & & {...} 
\end{tabular}
}
\end{center}

The justification for the values in the left most column is based on the annihilator 
\begin{center}
$(0:w)=\{q \in M:qw=0 \in M\}$
\end{center}
associated with the map which is multiplication by $w$. 
A basis for the b-graded component of the module $(0:w)$ is the following set: 
\begin{align}
B&=\left\{{\rm nonzero}\ p\in M\ {\rm of degree}\ b:yz|p\right\}\nonumber\\
&=\left\{(yz)(r):{\rm nonzero}\ r\in M_1\ {\rm with \ degree}\ b-2\ {\rm and}\ (x{\hat x})\nmid yzr\right\}\nonumber\\
&=\left\{(yz)(r):{\rm nonzero}\ r\in M_1 \ {\rm with \ degree}\ b-2\right\}.\nonumber
\end{align}
Thus, ${\rm HF}\{(0:w)\}=|B|={\rm HF}\{M_1(-2)\}$. Having accounted for all annihilator elements and using the fact that $b_2j=b_1j+b_3j-b_0j$
we find the Hilbert function for $M$. 

%%%%%%%%%%%%%%%%%%%%%%%%%%%%%%%%%%%%%%%%%%%%%%%%%%%%%%%%%%%%%%%%
\newpage
\begin{example}
We are looking for the Hilbert function of the module 
\begin{center}
 $M=k[x,y,z]/\langle x^2yz^3,x^3z,y^2z^2 \rangle$.
\end{center}
\end{example}

By rearranging the variables in our example we have that 
$M=k[y,z,x]/\langle y^2z^2,x^2yz^3,x^3z \rangle$ and based on Theorem \ref{no_monomial_added_to_ideal} we have:
\begin{center}
\begin{tabular}{ l | l l l l l l l l l}
${\rm HF}\{k[y]\}$   & 1 & 1 & 1 & 1 & 1 & 1 & 1 & 1 &... \\
\hline
 ${\rm HF}\{k[y,z]/\langle y^2z^2\rangle\}={\rm HF}\{M_1\}$ & 1 & 2& 3 & 4 & 4 & 4 & 4 & 4 &...
 \end{tabular}
\end{center}

Therefore, based on the short exact sequence we have 
\begin{center}
\footnotesize{
\begin{tabular}{l*{10}{c}r}
{0}\hspace{-6mm} & $\longrightarrow$\hspace{-6mm} & $(0:x)(-1)$\hspace{-6mm} &  $\longrightarrow$\hspace{-4mm} & $M(-1)$\hspace{-4mm} & $\longrightarrow$\hspace{-6mm} & $M$\hspace{-4mm} &  $\longrightarrow$\hspace{-6mm} & $M_1$\hspace{-6mm} & $\longrightarrow$\hspace{-4mm} & $0$\\
{0} & &{0} & &{0}  & & ~~~$\bf 1$ & & ${1}=\{1\}$ & & {0}\\
{0} & &{0} & & {1} & & ~~~$\bf 3$ & &${2}=\{y,z\}$ & & {0} \\
{0} & &{0} & &{3}  & & ~~~$\bf 6$ & & ${3}=\{y^2,yz,z^2\}$ & & {0}\\
{0} & &{0} & &{6}  & & ~~~$\bf 10$ & & ${4}=\{y^3,y^2z,yz^2,z^3\}$ & & {0}\\
{0} & &${1}=\{x^2z\}$ & &{10}  & & ~~~$\bf 12$ & & ${4}=\{y^4,y^3z,yz^3,z^4\}$ & & {0}\\
{0} & &${2}=\{yx^2z,z^2x^2\}$ & &{12}  & & ~~~$\bf 13$ & & ${4}=\{y^5,yz^4,y^4z,z^5\}$ & & {0}\\
{0} & &${4}=\{1,y^2x^2z,yzx^2z,z^2x^2z\}$ & &{13}  & & ~~~$\bf 14$ & & {4} & & {0}\\
{...} & & {...} & & {...} & &{...} & &{...} & & {...} \\
{...} & & {...} & & {...} & & {...} & &{...} & & {...} 
\end{tabular}
}
\end{center}
In order to figure out the Hilbert function of the annihilator module we need to find all the non zero elements in $M$. Those elements should be either multiple of $xyz^3$ or $x^2z$. Therefore,
we cannot have a factor of $y$ and a factor of $y^2z$. In other words, there are no elements in $M_1$ that create $x^2yz^3$ and $x^3z$. However, there are elements in $M_1$ that create $y^2z^2$. By this way and using the fact that the alternating sum is zero we create the above table.
In this example, we can observe that the drawback is that computing the Hilbert function of the annihilator ideal would require counting. In the next examples, we illustrate basic approaches to avoid counting.
%%%%%%%%%%%%%%%%%%%%%%%%%%%%%%%%%%%%%%%%%%%%%%%%%%%%%%%%%%%%%%%%%
\subsection{Examples}
Now we use the basic results found earlier in this section to compute the Hilbert function of some key examples. These will provide the motivation for the techniques we develop in sections 3 and 4.  For our convenince, we group the examples based on the number of monomials generating the ideal used to produce the quotient ring.  
\subsubsection{The ideal used to produce the quotient polynomial ring is a principal ideal}
Consider $M=k[\bar{x}]/\langle u \rangle$ where $\deg u = d$. 
Using equation (\ref{principal ideal Pascal}) we obtain the following:
\begin{equation}\label{principal ideal case}
{\rm HF}(M,b)=
  \begin{cases}
   F(a,b),  & \text {for}\  0\leq b \leq d-1 \\
   F(a,b)-F(a,b - d),  &\text{for } b\geq d
  \end{cases}
\end{equation}  
This approach combined with the result in Proposition \ref{combinatorial_sum_formula} immediately yields
\begin{equation}\label{matrix}
{\rm HF}(M,b)=
  \begin{cases}
   F(a,b)=\displaystyle\sum_{j=0}^{b} \frac{1}{j!} [a-1]^j,  & \text {for}\  0\leq b \leq d-1 \\
   F(a,b)-F(a,b - d)=\displaystyle\sum_{j=b-(d-1)}^{b} \frac{1}{j!} [a-1]^j,  &\text{for } b\geq d
  \end{cases}
\end{equation}
Next, (\ref{matrix}) can be encoded as matrix multiplication using an infinite matrix and infinite column vectors corresponding to the right-hand side of the above equation.

\[\left( \begin{array}{ccccccccc}
\frac{1}{0!} & 0 & 0 & 0 & 0 & 0 &0 & 0 &... \\
\frac{1}{0!} & \frac{1}{1!} & 0 & 0 &0 & 0 & 0 & 0 &... \\
\frac{1}{0!} & \frac{1}{1!} & \frac{1}{2!} & 0 &0 & 0 & 0 & 0 &...\\
\vdots & \vdots &\vdots & \vdots & \vdots & \vdots & \vdots &\vdots & \vdots   \\
\frac{1}{0!} & \frac{1}{1!} & \frac{1}{2!} & \frac{1}{3!} & ... & \frac{1}{(d -1)!} & 0 &0 &...\\
0 & \frac{1}{1!} & \frac{1}{2!} & \frac{1}{3!} & ... & \frac{1}{(d-1)!} & \frac{1}{d!} &0 &...\\
0 & 0 & \frac{1}{2!} & \frac{1}{3!} & ... & \frac{1}{(d-1)!} & \frac{1}{d!} &\frac{1}{(d+1)!} &...\\
%0 & 0 & 0 & 0 & \frac{1}{4!} & \frac{1}{5!} & \frac{1}{6!} &...\\
... &... & ... & ... & ... & ... & ... & ... & ...  \\
... & ... &... & ... & ... & ... & ... & ... & ...   \\
\end{array} \right) 
\cdot 
\left( \begin{array}{c}
\left[a-1\right]^{0}\\
\left[a-1\right]^1\\
\left[a-1\right]^2\\
\left[a-1\right]^3\\
\left[a-1\right]^4\\
\left[a-1\right]^5\\
\left[a-1\right]^6\\
...\\
...\\

\end{array} \right) = 
\left( \begin{array}{c}
{\bf {\rm HF}(M,0)}\\
{\bf {\rm HF}(M,1)}\\
{\bf {\rm HF}(M,2)}\\
{\bf \vdots}\\
{\bf {\rm HF}(M,d-1)}\\
{\bf {\rm HF}(M,d)}\\
{\bf {\rm HF}(M,d+1)}\\
{\bf ....}\\
{\bf ....}\\

\end{array} \right)   \hspace{1cm} .\]
%\newpage
In what follows, we concentrate our efforts in finding ways to compute the Hilbert function of a polynomial ring as finite sums and differences of the Pascal table row corresponding to the number of variables in our polynomial ring.  In each such case, one can produce a matrix multiplication approach similar to the above.  We'll leave this for the reader to try using the methods in section 4 as a starting point.  Here are two concrete examples to illustrate the above computations.
\vskip 0.5cm
\begin{example}
We are looking for the Hilbert function of the module 
%\begin{center}
 $M=k[x,y,z]/\langle xy^{2} \rangle$.
%\end{center}
\end{example}
Equation (\ref{principal ideal case}) indicates the following recurrence relation for this quotient ring
\[
{\rm HF}(M,b)=
  \begin{cases}
   F(a,b),  & \text {for}\  0\leq b\leq 2 \\
   
  F(a,b)-F(a,b-3), & \text {for}\  b\geq 3  
  \end{cases}
\]
where the coefficients of $F(a,b)$ and $F(a,b-3)$ denote the first entry of the Pascal triangle. Therefore, the Hilbert function of the module $M=k[x,y,z]/\langle xy^{2} \rangle $ is expressed by the following sequence of numbers
\begin{center}
\begin{tabular}{ l l l l l l l l l}
 ${\rm HF}(M,b)$: {\bf 1} & {\bf 3} & {\bf 6} & {\bf 9} & {\bf 12} & {\bf 15} & {\bf 18} & {\bf ...} \\
 \end{tabular}
\end{center} 
 Finally, by (\ref{matrix}) we can rewrite the second part of the above function as 
\begin{center}
$ F(a,b)-F(a,b-3)=\displaystyle\sum_{j=b-2}^{b} \frac{1}{j!} [a-1]^j$,  for $b\geq 3$. 
\end{center}

An alternative way to express the Hilbert function of $M$ is given by the following way

\[\left( \begin{array}{cccccccc}
1 & 0 & 0 & 0 & 0 & 0 & 0 &... \\
1 & \frac{1}{1!} & 0 & 0 & 0 & 0 & 0 &... \\
1 & \frac{1}{1!} & \frac{1}{2!} & 0 & 0 & 0 & 0 &...\\
0 & \frac{1}{1!} & \frac{1}{2!} & \frac{1}{3!} & 0 & 0 & 0 &...\\
0 & 0 & \frac{1}{2!} & \frac{1}{3!} & \frac{1}{4!} & 0 & 0 &...\\
0 & 0 & 0 & \frac{1}{3!} & \frac{1}{4!} & \frac{1}{5!} & 0 &...\\
0 & 0 & 0 & 0 & \frac{1}{4!} & \frac{1}{5!} & \frac{1}{6!} &...\\
... & ... & ... & ... & ... & ... & ... & ...  \\
... & ... & ... & ... & ... & ... & ... & ...   \\
\end{array} \right) 
\cdot 
\left( \begin{array}{c}
\left[2\right]^{0}\\
\left[2\right]^{1}\\
\left[2\right]^{2}\\
\left[2\right]^{3}\\
\left[2\right]^{4}\\
\left[2\right]^{5}\\
\left[2\right]^{6}\\
...\\
...\\

\end{array} \right) = 
\left( \begin{array}{c}
{\bf 1}\\
{\bf 3}\\
{\bf 6}\\
{\bf 9}\\
{\bf 12}\\
{\bf 15}\\
{\bf 18}\\
{\bf ...}\\
{\bf ...}\\

\end{array} \right)   \hspace{1cm} .\]

\subsubsection{The ideal used in the quotient polynomial ring consists of two monomials}

Suppose $M=k[\bar{x}]/ \langle u,v \rangle$ with ${\rm deg}(u)=d_u$ and  ${\rm deg}(v)=d_v$. Again, the key to computing the ${\rm HF}(M,b)$ is finding a way to account exactly once for the monomials of degree $b$ belonging to the ideal $\langle u,v \rangle$. There are three possibilities which the reader may visualize as the Venn Diagram of two overlaying regions: one corresponding to $\langle u \rangle$ and the other to the corresponding $\langle v \rangle$. In fact, $q \in  \langle u \rangle$ and $q \in \langle v \rangle$ iff $q \in \langle{\rm lcm }(u,v) \rangle$. To see this, observe that $u \mid q$ and $v \mid q  \Leftrightarrow  {\rm lcm }(u,v) \mid q$.
Then by using the inclusion-exclusion principle we have
\begin{center} 
${\rm HF}(\langle u,v \rangle , b)= {\rm HF} ( \langle u \rangle ,b) + {\rm HF} ( \langle v \rangle ,b) - {\rm HF} ( {\rm lcm}(u,v),b).$ 
\end{center}
Since every ideal on the right-hand side is a principal ideal, we can apply lemma \ref{HF for principal ideal} and the ``rank--nullity''  reasoning from section 1 to get 
\[
{\rm HF}(M,b)=
  \begin{cases}
   F(a,b),  & \text {for }  0\leq b< d_{\min}  \\
   F(a,b) -F(a,b-d_{\min}), &\text{for } d_{\min} \leq b < d_{\max}\\
   F(a,b) -F(a,b-d_u) -F(a,b-d_v),   &\text{for } d_{\max} \leq b < d_{\rm lcm}\\
   F(a,b) -F(a,b-d_u) -F(a,b-d_v) +F(a,b- d_{\rm lcm}), &\text{for } b \geq d_{\rm lcm} \\
  \end{cases}
\]
where $d_{\rm lcm} = {\rm deg}({\rm lcm}(u, v))$, $d_{\min}=\min(d_u, d_v)$ and $d_{\max}=\max(d_u, d_v)$.

\begin{example}\label{two monomial ideal case}
We are looking for the Hilbert function of the module 
\begin{center}
 $M=k[x,y,z]/\langle x^{2}y,xz^{2} \rangle$.
\end{center}
\end{example}

Equation (\ref{principal ideal case}) indicates the following recurrence relation for this quotient ring
\[
{\rm HF}(M,b)=
  \begin{cases}
   F(a,b),  & \text {for}\  0\leq b\leq 2  \\
   
  F(a,b)-2F(a,b-3)+F(a,b-5), & \text {for}\  b\geq 3 
  \end{cases}
\]
Notice here that ${\rm lcm}(x^{2}y, xz^{2})=x^{2}yz^{2}$.
Therefore, the Hilbert function of the module $M=k[x,y,z]/\langle x^{2}y,xz^{2} \rangle$ is expressed by the following sequence of numbers
\begin{center}
\begin{tabular}{ l l l l l l l l l}
 ${\rm HF}(M,b)$:& {\bf 1} & {\bf 3} & {\bf 6} & {\bf 8} & {\bf 9} & {\bf 10} & {\bf 11} & {\bf ...} \\
 \end{tabular}
\end{center}

In the next section, we make full use of the \emph{Principle of Inclusion and Exclusion} to develop what we will call the \emph{lcm--lattice method} to handle any monomial ideal with a finite number of monomials.  Before moving to the next section, let us take advantage of this example to illustrate an alternative which accounts for the monomials of degree $b$ in the ideal only once.  In other words, the principle of inclusion-exclusion is a sequence of corrections for alternating over-counts and under-counts which corresponds to regions of the Venn diagram where two, three, four, etc... sets overlaps.  Our goal here is to partition the union of all sets in the Venn diagram into disjoint sets as to avoid alternating inclusions with exclusions.  This is accomplished by ordering our sets $E_1, E_2, E_3, \ldots$ then letting $F_1 = E_1, F_2=E_2 \setminus F_1, F_3=E_3 \setminus (F_1 \cup F_2), \ldots$.  This is an approach conceptually similar to the Gram-Schmidt process in linear algebra.

Let $u=x^2y$ and $v=xz^2$. Let also $E_1= \langle u \rangle$ and $E_2= \langle v \rangle$ then $F_1=E_1$ and $F_2=\{ \text{all monomials which are multiple of $v$ but not of $u$}\}$. Since $E_1$ and $E_2$ are graded modules then $F_1$ and $F_2$ will be graded sets. To illustrate this further, for degree $4$, $E_1$ and $E_2$ are disjoint so no monomials of degree $4$ need to be excluded from $F_2$. However, for degree $5$, for example $uz^2=vxy$. In this case, we want to count $x^2yz^2$ as a multiple of $u$ (i.e. belonging to $F_1$) but prevent it being counted as a multiple of $v$. Observe that
%\begin{center}
$\frac{ {\rm lcm}(x^2y,xz^2)}{xz^2}=\frac{x^2yz^2}{xz^2}=xy$
%\end{center} 
which is known as a syzygy. Thus, in example \ref{two monomial ideal case}, we can generate the table below 
\vspace{5mm}
\begin{center}
\begin{tabular}{l|l|l|l|l|l|l|l|l}
{ Degree k} & 0 & 1 & 2 & 3 & 4 &5 &6 & ...\\
\hline
{{\rm HF}\{k[x,y,z]\}} & 1 & 3 & 6 & 10 &15 &21 &28 & ... \\
\hline
{\bf $F_1$} & & & & $u$ & $ux, uy, uz,$ & $ux^2, uy^2, uz^2, $ & $ux^3, uy^3, uz^3, uxyz,$ & ...\\

& & & & &  &$uxy, uyz, uxz$&$ux^{2}y, uxy^{2}, ux^{2}z,$&...\\

& & & & & &  &$uy^{2}z, uxz^2, uyz^2$&...\\
\hline
{\bf $F_2$} &  &  &  & $v$ & $vx, vy, vz$ & $vx^2, vy^2, vz^2, $ & $vx^3, vy^3, vz^3, $ & ... \\

& & & & & & $vxz, vyz$ &$vx^{2}z ,vy^{2}z, vxz^2, vyz^2$&...\\

\hline
{\bf $|F_1|$} &  &  &  & 1 &3 &6 &10 & ... \\
\hline
{\bf $|F_2|$} &  &  &  & 1 &3 &5 &7 & ... \\
\hline
{\bf {\rm G(a,b)} } & 1 & 3 & 6 & 8 &9 &10 &11 & ... \\
\end{tabular}
\end{center}
\vspace{8mm}
where $G(a,b)={\rm HF} \{M\}={\rm HF}\{k[x,y,z]\}-|F_1|-|F_2|$.

%%%%%%%%%%%%%%%%%%%%%%%%%%%%%%%%%%%%%%%%%%%%%%%%%%%%%%%%%%%%%%%%%%%%%%%%%%%

\newpage
\section{LCM--Lattice Method}

As discussed at the end of the previous section, the challenge remains to find the Hilbert function of a monomial ideal with more than one monomial generators. In this section, first we start with some basic theory and then use the well known \emph{Principle of Inclusion and Exclusion} (which the reader will find in the standard reference \cite{van2001course}) to validate the method developed.
\begin{proposition}
$\langle u \rangle \cap \langle v \rangle=\langle {\rm lcm}(u,v) \rangle $ 
\end{proposition}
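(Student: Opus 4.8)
The plan is to reduce this equality of ideals to a statement about individual monomials, using the fact that a monomial ideal contains a polynomial exactly when it contains each of that polynomial's monomial terms. First I would record the elementary membership criterion for a principal monomial ideal: for monomials $w$ and $m$ in $k[\bar{\mathrm{x}}]$ one has $m \in \langle w \rangle$ if and only if $w \mid m$ --- the forward direction because every term of $wf$ is a multiple of $w$, and the reverse because $m = w\cdot(m/w)$. Combined with the observation already used in Section~2 (the monomial analogue of the graded-ideal decomposition in Lemma~\ref{L:characterization_graded_ideal}) that a polynomial lying in a monomial ideal has all of its monomial components in that ideal, this lets me test membership one monomial at a time.

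For the inclusion $\langle {\rm lcm}(u,v) \rangle \subseteq \langle u \rangle \cap \langle v \rangle$, I would note that $u \mid {\rm lcm}(u,v)$ and $v \mid {\rm lcm}(u,v)$, so ${\rm lcm}(u,v) \in \langle u \rangle$ and ${\rm lcm}(u,v) \in \langle v \rangle$; since each $\langle w \rangle$ is an ideal, the ideal generated by ${\rm lcm}(u,v)$ is contained in both $\langle u \rangle$ and $\langle v \rangle$, hence in their intersection.

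For the reverse inclusion, take $p \in \langle u \rangle \cap \langle v \rangle$ and write it as a $k$-linear combination of distinct monomials $m_1, \ldots, m_r$. Since $p \in \langle u \rangle$, each $m_i$ is divisible by $u$, and since $p \in \langle v \rangle$, each $m_i$ is divisible by $v$. Because the exponent vector of ${\rm lcm}(u,v)$ is the componentwise maximum of those of $u$ and $v$, divisibility by both $u$ and $v$ is equivalent to divisibility by ${\rm lcm}(u,v)$; hence each $m_i \in \langle {\rm lcm}(u,v) \rangle$, and therefore $p \in \langle {\rm lcm}(u,v) \rangle$.

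There is no genuine obstacle here; the only point that warrants a little care is the justification that membership in a monomial ideal can be checked term-by-term, which is exactly the monomial counterpart of Lemma~\ref{L:characterization_graded_ideal} and may be cited rather than reproved. Note also that this proposition merely formalizes the equivalence ``$u \mid q$ and $v \mid q \Leftrightarrow {\rm lcm}(u,v) \mid q$'' already invoked in Section~2.
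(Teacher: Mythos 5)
Your proof is correct and rests on the same two observations as the paper's: membership in a principal monomial ideal is divisibility by the generator, and divisibility by both $u$ and $v$ is equivalent to divisibility by ${\rm lcm}(u,v)$. The paper states exactly this chain of equivalences for an arbitrary element $p$ in one line; your term-by-term reduction to monomials is just a more careful spelling-out of the same argument, not a different route.
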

\begin{proof}
$p \in \langle u \rangle \cap \langle v \rangle \Leftrightarrow u\mid p$ and $v \mid p \Leftrightarrow {\rm lcm }(u,v) \mid p$
\end{proof}

\begin{corollary}
$\langle p_1 \rangle \cap \langle p_2 \rangle  \cap \langle p_3 \rangle \cap ...\cap  \langle p_r \rangle =\langle {\rm lcm}(p_1,p_2,p_3,...,p_r) \rangle $ 
\end{corollary}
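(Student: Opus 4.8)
The plan is to prove the corollary by induction on $r$, using the preceding proposition as the base case and the associativity of both set intersection and the least common multiple as the inductive engine. This keeps the argument entirely within the ideal-theoretic language already established, and avoids any recourse to the explicit exponent-vector description of monomials.

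First I would set up the base case: for $r = 2$ the statement is exactly the preceding proposition, $\langle p_1 \rangle \cap \langle p_2 \rangle = \langle {\rm lcm}(p_1, p_2) \rangle$. For the inductive step, assume the result holds for $r - 1$, so that $\langle p_1 \rangle \cap \cdots \cap \langle p_{r-1} \rangle = \langle {\rm lcm}(p_1, \ldots, p_{r-1}) \rangle$. Intersecting both sides with $\langle p_r \rangle$ and writing $w = {\rm lcm}(p_1, \ldots, p_{r-1})$, the left side becomes the full intersection $\langle p_1 \rangle \cap \cdots \cap \langle p_r \rangle$, while the right side becomes $\langle w \rangle \cap \langle p_r \rangle$, which by the proposition equals $\langle {\rm lcm}(w, p_r) \rangle$. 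It then remains to observe the purely arithmetic identity ${\rm lcm}({\rm lcm}(p_1, \ldots, p_{r-1}), p_r) = {\rm lcm}(p_1, \ldots, p_r)$, i.e. that the least common multiple of monomials is associative; this follows coordinatewise from the fact that, on exponent vectors, ${\rm lcm}$ computes the componentwise maximum, and $\max$ is associative. Chaining the equalities gives $\langle p_1 \rangle \cap \cdots \cap \langle p_r \rangle = \langle {\rm lcm}(p_1, \ldots, p_r) \rangle$, completing the induction.

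The only step requiring any care — and the closest thing to an obstacle — is the associativity of ${\rm lcm}$ on monomials, since the proposition as stated is only a two-term statement and one must be slightly careful that ``$\langle p_1\rangle \cap \cdots \cap \langle p_{r-1}\rangle$'' is genuinely an intersection of principal ideals on which the inductive hypothesis applies after one more intersection. Both points are routine: the former is the componentwise-maximum observation above, and the latter just needs that intersection is associative and that the inductive hypothesis replaces the first $r-1$ factors by a single principal ideal $\langle w \rangle$ before the final application of the proposition. Everything else is direct substitution.
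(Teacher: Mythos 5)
Your proposal is correct and follows essentially the same route as the paper: induction on $r$ with the two-term proposition as the base case, intersecting the inductive hypothesis with $\langle p_r\rangle$, and using the identity ${\rm lcm}({\rm lcm}(p_1,\ldots,p_{r-1}),p_r)={\rm lcm}(p_1,\ldots,p_r)$. Your explicit justification of the associativity of ${\rm lcm}$ via componentwise maxima is a small point the paper leaves implicit, but the argument is otherwise identical.
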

\begin{proof}(By Induction)
\begin{itemize}
\item
The above proposition is the above case.
\item
Suppose $\displaystyle \bigcap _{i=1}^{r-1} \langle p_i \rangle=\langle {\rm lcm}(p_1,p_2,p_3,...,p_{r-1}) \rangle $ 
then 
\begin{center}
$\displaystyle \bigcap _{i=1}^{r} \langle p_i \rangle=\displaystyle \bigcap _{i=1}^{r-1} \langle p_i \rangle \cap  \langle p_r \rangle=  \langle {\rm lcm}({\rm lcm }(p_1,p_2,p_3,...,p_{r-1}),p_r) \rangle= \langle {\rm lcm }(p_1,p_2,p_3,...,p_r) \rangle.$
\end{center}
\end{itemize}
\end{proof}

Further use of inclusion-exclusion; this time with $n$ monomials we get 
\begin{eqnarray}
{\rm HF}( \langle p_1,p_2,p_3,...,p_n \rangle ,b)&=& |\{ \text{monomials of degree }b\text{  in }\langle  p_1,p_2,p_3,...,p_n \rangle \}| \nonumber \\
 & = &|\{ \text {monomials of degree } b \text{ in } \langle p_1 \rangle \text{ OR } \langle p_2 \rangle \text{ OR}...\text{ OR } \langle p_n \rangle \}| \nonumber \\
&=& \displaystyle\sum_{1 \leq j_1 \leq n} |\langle p_{j_1} \rangle |  -  \displaystyle\sum_{1 \leq j_1 <j_2 \leq n} |\langle {\rm lcm}(p_{j_1},p_{j_2})\rangle | \nonumber\\
& & + \displaystyle\sum_{1 \leq j_1 <j_2 <j_3 \leq n} |\langle (p_{j_1},p_{j_2},p_{j_3}) \rangle|\nonumber\\
& & +............................................................. \nonumber\\
& & + (-1)^{r-1} \cdot \displaystyle\sum_{1 \leq j_1 <j_2<...<j_r \leq n} |\langle {\rm lcm}(p_{j_1},p_{j_2},...,p_{j_r})\rangle | \nonumber\\
& & + (-1)^{n-1} \cdot  |\langle {\rm lcm}(p_{1},p_{2},...,p_{n})\rangle | \nonumber\\  
&=&  \displaystyle\sum_{r=1}^{n}\left((-1)^{r-1} \cdot \displaystyle\sum_ {1 \leq j_1 <j_2<...<j_r \leq n} | \langle {\rm lcm} (p_{j_1},p_{j_2},...,p_{j_r})\rangle|\right ).\nonumber \ 
\end{eqnarray}
 Assigning $d_{{j_1}{j_2}{j_3}...{j_r}}= {\rm deg} ( {\rm lcm}(p_1,p_2,p_3,...,p_r))$, where $1 \leq r \leq n$ and  $1 \leq j_1 < j_2 <j_3<...<j_r \leq n$. To facilitate expressing the Hilbert function let's expand $F(a,b)=0$ if $ b<0$.  Then
$${\rm HF}( \langle p_1,p_2,p_3,...,p_n \rangle ,b) =\displaystyle \sum_{r=1}^{n}\left((-1)^{n-1} \cdot \displaystyle\sum_ {1 \leq j_1 <j_2<...<j_r \leq n} F(a,b-d_{{j_1}{j_2}{j_3}...{j_r}})\right)$$
\begin{center}
and
\end{center}
$${\rm HF}(k[ \bar{x}] / \langle p_1,p_2,p_3,...,p_n \rangle ,b)=
F(a,b)- \displaystyle \sum_{r=1}^{n}\left((-1)^{n-1} \cdot \displaystyle\sum_ {1 \leq j_1 <j_2<...<j_r \leq n} F(a,b-d_{{j_1}{j_2}{j_3}...{j_r}})\right).$$

The lcm--lattice method described below is based on the above argument.
The starting point of building up the lcm--lattice is what we call layer 1.  Layer 1 is a  row containing all the monomials of the given ideal. Finding the lcm of all the pairs we create the $2^{\rm nd}$ layer. Next, we find the lcm of all the triples in layer 1 and we call this layer 3. Following the same pattern, we create as many layers as the number of monomials in the given ideal. The last layer will contain the lcm of all the monomials given in the ideal.
If the ideal $I$ contains $n$ monomials then the number of monomials in the lcm-lattice in layers 1, 2, 3, ..., n will be $\dbinom{n}{1},\,\dbinom{n}{2},\,\dbinom{n}{3},\,\ldots, \dbinom{n}{n}$ correspondingly.  These values are those found in the $n^{\rm th}$ row of the Pascal triangle to the right and including $\dbinom{n}{1}$. The following examples give a nice view of the above description.
%%%%%%%%%%%%%%%%%%%%%%%%%%%%%%%%%%%%%%%%%%%%%%%%%%%%%%
\begin{example}
Find the Hilbert function of 
%\begin{center}
$M=R/\langle x^2,y^3 \rangle$ where $R=k[x,y,z]$.
%\end{center}
\end{example}
In the case that we have two monomials in the ideal, the lcm lattice is simple.
Start by building up the lcm lattice.
Layer 1 is called the row that has all the monomials of the ideal. 
Afterwards, we take the lcm of the two monomials and we have the following 
\begin{center}
\begin{tabular}{l c l }
$x^2$&$y^3$& layer 1\\
~~~~~$x^2y^3$&& layer 2
\end{tabular}
\end{center}

According now to the above lcm lattice, we are left with a lattice of monomials on which we use inclusion - exclusion at each row to produce the alternating sum that computed the Hilbert function
\begin{center}
\scalebox{0.85}{
\begin{tabular}{ l | l | l | l  }
${\rm HF}\{R\}$ & layer 1(-) & layer 2($+$) & ${\rm HF} \{M\}$\\
\hline
 ~1 & ~~~~0  ~~~~~0 &  ~~~0 & ~~~ $\bf 1$ \\
\hline
 ~3 & ~~~~0  ~~~~~0 & ~~~0 & ~~~ $\bf 3$\\
\hline
 ~6 & ~~~-1  ~~~~~0 &  ~~~0 & ~~~ $\bf 5$ \\
\hline
 10 & ~~~-3  ~~~~-1 &   ~~~0 & ~~~ $\bf 6$ \\
\hline
 15 & ~~~-6  ~~~~-3 &  ~~~0 & ~~~ $\bf 6$ \\
\hline
 21 & ~~~-10  ~~~-6 &  ~~~1 & ~~~ $\bf 6$ \\
\hline
 28 & ~~~-15  ~~~-10 &   ~~~3 & ~~~ $\bf 6$ \\
\hline
 36 & ~~~-21  ~~~-15 &  ~~~6 & ~~~ $\bf 6$ \\
\hline
 45 & ~~~-28  ~~~-21 & ~~~10 & ~~~ $\bf 6$ \\
\hline
 55 & ~~~-36  ~~~-28 & ~~~15 & ~~~ $\bf 6$ \\
\hline
 ~.. & ~~~~..  ~~~~~.. & ~~~.. & ~~~~.. \\
%\hline
%~.. & ~~~~..  ~~~~~.. &~~~.. & ~~~~.. \\
\end{tabular}}
\end{center}

%%%%%%%%%%%%%%%%%%%%%%%%%%%%%%%%%%%%%%%%%%%%%%%%%%%%%%

%We apply now our lcm--lattice method to quotient rings with three monomials  in the ideal.
\begin{example}
Given the quotient ring $M=k[x,y,z]/\langle  xz,yz,x^2y \rangle$, find its Hilbert function.
\end{example}
Start by building up the lcm lattice.

\begin{center}
\begin{tabular}{c c c c}
${\bf x^2y}$&$xz$&$yz$&layer 1\\
${\bf{x^2yz}}$&$x^2yz$&${\bf xyz}$&layer 2\\
$$&${\bf{x^2yz}}$&&layer 3\\
\end{tabular}
\end{center}
So we are left with the above lattice of monomials on which we use inclusion - exclusion at each row to produce the alternating sum that computed the Hilbert function.
Let $R=k[x,y,z]$. Since we observe that there are monomials of the same degree in adjacent rows of the lcm-lattice lattice, we exclude these pairs of monomials from the alternating sum in our table. The cancellation is because the net contribution of such a pair to the alternating sum is zero. The monomials that are canceled are displayed in bold-faced.

\begin{center}
\begin{tabular}{ c|c|c|c }
${\rm HF}\{R\}$ & layer 1(-) & layer 2($+$) & ${\rm HF}\{M\}$\\
\hline
 1 &0  &0 & $\bf 1$ \\
\hline
 3 & 0  &0 & $\bf 3$\\
\hline
 6 &-2  &  0 & $\bf 4$ \\
\hline
 10 & -6  &  0 &  $\bf 4$ \\
\hline
 15 & -12  & 1 & $\bf 4$ \\
\hline
 21 & -20  & 3 &  $\bf 4$ \\
\hline
 28 & -30  & 6 &  $\bf 4$ \\
\hline
 36 & -42  & 10 & $\bf 4$ \\
\hline
 45 & -56  &15 & $\bf 4$ \\
\hline
 55 & -72  &21 & $\bf 4$ \\
\hline
 ..&..  & .. & .. \\
\hline
 .. & .. & ..& .. 
\end{tabular}
\end{center}

\newpage
\begin{example}
Compute the Hilbert function of the module 
\begin{center}
 $M=k[x,y,z]/\langle  x^2y^3z,xz^3,xy^4z,x^2z^2 \rangle$.
\end{center}
As before, for the sake of simplicity, we will let $R=k[x,y,z]$.
\end{example}
Start by building up the lcm lattice and we have
\begin{center}
\begin{tabular}{c c c c c c c}
$ $&$x^2y^3z$&$xz^3$&$xy^4z$&$x^2z^2$&$$& layer 1\\
${\bf x^2y^3z^3}$&$x^2y^4z$&$x^2y^3z^2$&$xy^4z^3$&$x^2z^3$&${\bf x^2y^4z^2}$& layer 2\\
$ $&${\bf x^2y^4z^3}$&${\bf x^2y^3z^3}$&$x^2y^4z^3$&${\bf x^2y^4z^2}$&$$& layer 3\\
$$&$$&$$&${\bf x^2y^4z^3}$&$$&$$&layer 4\\
\end{tabular}
\end{center}
Again, we typeset in bold face the monomials that are canceled and obtain the following table
\begin{center}
\begin{tabular}{ l | l | l | l | l }
${\rm HF}\{R\}$ & ~~~layer 1& ~~~~layer 2 &layer 3&${\rm HF}\{M\}$\\
\hline
 ~1 & ~~~~0  ~~~~~0 & ~~~0  ~~~0 ~~~0 & ~~~ 0&~~~ $\bf 1$ \\
\hline
 ~3 & ~~~~0  ~~~~~0 & ~~~0  ~~~0 ~~~0 &~~~ 0& ~~~ $\bf 3$\\
\hline
 ~6 & ~~~~0  ~~~~~0 & ~~~0  ~~~0 ~~~0 &~~~ 0& ~~~ $\bf 6$ \\
\hline
 10 & ~~~~0  ~~~~~0 & ~~~0  ~~~0 ~~~0 &~~~ 0& ~~ $\bf 10$ \\
\hline
 15 & ~~~-2  ~~~~~0 & ~~~0  ~~~0 ~~~0&~~~ 0& ~~~$\bf 13$ \\
\hline
 21 & ~~~-6  ~~~~~0 & ~~~1  ~~~0 ~~~0 &~~~ 0& ~~ $\bf 16$ \\
\hline
 28 & ~~-12  ~~~-2 & ~~~3  ~~~0 ~~~0 &~~~ 0& ~~ $\bf 17$ \\
\hline
 36 & ~~-20  ~~~-6 & ~~~6  ~~~2 ~~~0&~~~ 0& ~~ $\bf 18$ \\
\hline
 45 & ~~-30  ~~-12 & ~~10  ~~~6 ~~1 &~~~ 0& ~~ $\bf 20$ \\
\hline
 55 & ~~-42  ~~-20 & ~~15  ~~12 ~~3 &~~~ -1& ~~ $\bf 22$ \\
\hline
 ~.. & ~~~~..  ~~~~~.. & ~~~..~~~.. ~~~..& ~~~~..& ~~~~.. \\
\hline
~.. & ~~~~..  ~~~~~.. & ~~~..~~~..~~~.. & ~~~~..& ~~~~.. \\
\end{tabular}
\end{center}

%%%%%%%%%%%%%%%%%%%%%%%%%%%%%%%%%%%%%%%%%%%%%%%%%%%%%%%%%%%%%%%%%%%%%%%%%%%%%%%%

\section{The Syzygy Method}

In this section we extend the second approach to the example \ref{two monomial ideal case} to handle ideals with finitely many monomials as generators.  When implemented as a recursive algorithm this method will break down a Hilbert function computation into a sum--difference expression of Hilbert functions all of which involve principal ideals.  The computation is finished by invoking Corollary \ref{principal ideal case}.  Unlike the lcm--method, the principal ideals used will be generated by always taking syzygys of pairs of monomials (we never consider three or more of the given monomials in a computational step).  The key recursive step is given by the following theorem. 
\begin{theorem}{\rm (Syzygy method)}\label{Syzygy method}
Let $M=k[\bar  {\rm x}]/ I $,
where $I=\langle p_1, p_2, p_3,..., p_r \rangle$, a monomial ideal generated by $p_1,p_2,p_3,...,p_r \in k[\bar  {\rm x}]$.  Then, using the notation  $d_j={\rm deg}(p_j)$ and $m_{ij}=\frac{{\rm lcm}(p_i,p_j)}{p_j} \in k[\bar  {\rm x}]$ with $i<j$, we have
\begin{center}
\small{${\rm HF}(M,t)=F(a,t)-F(a,t-d_1)-\displaystyle\sum_{j=2}^{r} {\rm HF}(k[\bar  {\rm x}]/\langle m_{1j}, m_{2j}, m_{3j},..., m_{(j-1)j}\rangle, t-d_{j})$}. 
\end{center}
\end{theorem}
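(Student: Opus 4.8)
The plan is to count, for each degree $t$, the monomials of $R=k[\bar{\rm x}]$ of degree $t$ that do \emph{not} lie in $I$, since that count equals ${\rm HF}(M,t)$. The monomials of degree $t$ in $I$ are exactly those divisible by at least one $p_j$. To avoid the alternating over/under-counting of inclusion--exclusion, I would partition the set of monomials in $I$ into disjoint ``Gram--Schmidt'' layers $F_1, F_2, \dots, F_r$ exactly as done for the two-monomial case at the end of Section 2: set $F_1 = \langle p_1\rangle$ and, for $j \geq 2$, let $F_j$ be the set of monomials divisible by $p_j$ but by none of $p_1, \dots, p_{j-1}$. Then $\langle p_1,\dots,p_r\rangle = F_1 \sqcup F_2 \sqcup \dots \sqcup F_r$ as graded sets, so ${\rm HF}(I,t) = \sum_{j=1}^r |(F_j)_t|$, and by Theorem \ref{rank-nullity Hilbert}, ${\rm HF}(M,t) = F(a,t) - \sum_{j=1}^r |(F_j)_t|$. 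Since $F_1 = \langle p_1\rangle$, Lemma \ref{HF for principal ideal} gives $|(F_1)_t| = F(a, t-d_1)$, which accounts for the first two terms of the claimed formula.

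The crux is therefore to identify $|(F_j)_t|$ with ${\rm HF}(k[\bar{\rm x}]/\langle m_{1j},\dots,m_{(j-1)j}\rangle, t-d_j)$. The key observation is the divisibility translation: a monomial $q$ is divisible by $p_j$ iff $q = p_j \cdot s$ for a unique monomial $s$ of degree $t - d_j$, and then (for $i<j$) $p_i \mid q$ iff $p_i \mid p_j s$ iff ${\rm lcm}(p_i,p_j) \mid p_j s$ iff $\frac{{\rm lcm}(p_i,p_j)}{p_j} \mid s$, i.e.\ iff $m_{ij} \mid s$. Hence $q \in F_j$ — meaning $p_j \mid q$ but no $p_i \mid q$ for $i<j$ — corresponds precisely to $s$ being a monomial of degree $t-d_j$ divisible by none of $m_{1j}, \dots, m_{(j-1)j}$, i.e.\ $s \notin \langle m_{1j},\dots,m_{(j-1)j}\rangle$. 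The number of such $s$ is exactly ${\rm HF}(k[\bar{\rm x}]/\langle m_{1j},\dots,m_{(j-1)j}\rangle, t-d_j)$. This gives a degree-preserving bijection between $(F_j)_t$ and the degree-$(t-d_j)$ component of the complementary monomial basis of that quotient ring, so the cardinalities agree. Summing over $j$ and substituting into the expression from the previous paragraph yields the theorem.

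The main obstacle I anticipate is being careful with the divisibility step ${\rm lcm}(p_i,p_j) \mid p_j s \iff m_{ij} \mid s$: one must check $m_{ij} = \frac{{\rm lcm}(p_i,p_j)}{p_j}$ is genuinely a monomial (it is, since $p_j \mid {\rm lcm}(p_i,p_j)$) and that dividing both sides of the divisibility relation by $p_j$ is valid in the monoid of monomials — this is where working with exponent vectors and the componentwise max/min characterization of ${\rm lcm}$ makes the argument cleanest. A secondary point to verify is that the $F_j$ really are disjoint and exhaust $I$, which is immediate from their definition but worth stating, and that each is a graded set so that counting in each degree $t$ is well-defined. Everything else is bookkeeping: the shift $(-d_j)$ in the grading is exactly the $t \mapsto t - d_j$ reindexing already codified in the notation ${\rm HF}\{M(-d)\} := {\rm HF}(M, t-d)$, and the edge cases ($t < d_j$, or $j$ with $m_{ij} = 1$ for some $i$, forcing $(F_j)_t$ empty) are handled automatically by the convention $F(a,b) = 0$ for $b<0$ and by the quotient ring collapsing to zero.
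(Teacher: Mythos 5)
Your proposal is correct and is essentially the paper's own argument: the paper proves the identity by induction on $r$, where the set $\Gamma(t)$ of monomials killed by adjoining $p_r$ is exactly your layer $F_r$ in degree $t$, and the key step — writing $q = s\,p_j$ and translating $p_i \mid q$ into $m_{ij} \mid s$ so that the admissible $s$ are counted by ${\rm HF}(k[\bar{\rm x}]/\langle m_{1j},\dots,m_{(j-1)j}\rangle, t-d_j)$ — is identical. Your direct partition $I = F_1 \sqcup \cdots \sqcup F_r$ is just the paper's induction unrolled into a single telescoping sum.
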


\begin{proof}
(By Induction)

\begin{itemize}
\item 
Base case $r=1$ then this hold by the corollary 1.0.1.
\item
Suppose $r>1$ and
\vspace{-1mm}
\begin{eqnarray}
{\rm HF}(k[\bar  {\rm x}]/\langle p_1, p_2, p_3,..., p_{r-1}\rangle,t)&=&F(a,t)-F(a,t-d_1)\nonumber\\  
& &-\displaystyle\sum_{j=2}^{r-1} {\rm HF}(k[\bar  {\rm x}]/\langle m_{1j}, m_{2j}, m_{3j},..., m_{(j-1)j}\rangle, t-d_{j}).\nonumber
\end{eqnarray}
\item
We show that 
\vspace{-1mm}
\begin{eqnarray}
{\rm HF}(k[\bar  {\rm x}]/\langle p_1, p_2, p_3,..., p_{r} \rangle,t)&=&{\rm HF}(k[\bar  {\rm x}]/\langle p_1, p_2, p_3,..., p_{r-1} \rangle,t)\nonumber\\
& &- {\rm HF}(k[\bar  {\rm x}]/\langle m_{1r}, m_{2r}, m_{3r},..., m_{(r-1)r}\rangle, t-d_{r}).\nonumber
\end{eqnarray}
A monomial $q \in k[\bar  {\rm x}]$, of degree $t$ represent a nonzero element in $k[\bar  {\rm x}]/\langle p_1, p_2, p_3,..., p_{r-1} \rangle$ and is zero in $k[\bar  {\rm x}]/\langle p_1, p_2, p_3,..., p_{r} \rangle$ if and only if 
$p_i \nmid q$ for all  $1\leq i<r$ and $p_r|q$. If we call the set of all such monomials $\Gamma (t)$ then we have that
\begin{center}
\small{${\rm HF}(k[\bar  {\rm x}]/\langle p_1, p_2, p_3,..., p_{r} \rangle,t)={\rm HF}(k[\bar  {\rm x}]/\langle p_1, p_2, p_3,..., p_{r-1} \rangle,t)-|\Gamma (t)|$}.
\end{center}
A monomial  $q \in k[\bar  {\rm x}]$ satisfies  $q \in \Gamma (t)\Leftrightarrow q=a \cdot p_r$, where $a$ is a monomial in $k[\bar  {\rm x}]$ of degree $t-d_r$ and $p_i \nmid a \cdot p_r$ for all $1\leq i<r $. This is equivalent to $m_{ir} \nmid a$ for all $1\leq i<r $.  
Since $a$ is a monomial we have that,
\vspace{-5mm}
\begin{align}
a \notin \langle m_{ir} \rangle, &\text{ for all } 1\leq i \leq r-1 \nonumber\\
&\Leftrightarrow a \notin  \langle  m_{1r}, m_{2r}, m_{3r},..., m_{(r-1)r}\rangle \nonumber\\ 
&\Leftrightarrow a \in k[\bar  {\rm x}]/\langle m_{1r}, m_{2r}, m_{3r},..., m_{(r-1)r}\rangle.\nonumber
\end{align}
Finally, to finish the proof and establish that 
\begin{center}
{$|\Gamma (t)|= {\rm HF}(k[\bar  {\rm x}]/\langle m_{1r}, m_{2r}, m_{3r},..., m_{(r-1)r}\rangle, t-d_{r})$}
\end{center}
we only need to observe that $a$ is uniquely determined by $q \in\Gamma (t)$ and every
$a \in k[\bar  {\rm x}]/\langle m_{1r}, m_{2r}, m_{3r},..., m_{(r-1)r}\rangle$ uniquely determines a monomial $q$. 
\end{itemize}
\vspace{-5mm}
\end{proof}

%\begin{example} 
Both the lcm-lattice-method and the Syzygy method produce similar formulas for computing the Hilbert function.  Next we apply the Syzygy method to establish that the lcm-lattice method holds for a monomial ideal with three monomials. The reader should observe that this will confirm of that result without the use of inclusion-exclusion.
%\end{example}
Consider $I$ generated by three (not necessarily distinct) monomials $p_1,p_2,p_3$ with degrees $d_1,d_2,d_3$ respectively. We need to show that 
\begin{eqnarray}
 HF\{k[\bar  {\rm x}]/\langle p_1,p_2,p_3 \rangle\}& = &F(a,t)-F(a,t-{\rm deg}(p_1))\nonumber \\    
    &  & -F(a,t-{\rm deg}(p_2))-F(a,t-{\rm deg}(p_3))\nonumber \\
    &  & +F(a,t-{\rm deg}({\rm lcm}(p_1,p_2)))+F(a,t-{\rm deg}({\rm lcm}(p_2,p_3)))\nonumber\\
    &  & +F(a,t-{\rm deg}({\rm lcm}(p_1,p_3)))-F(a,t-{\rm deg}({\rm lcm}(p_1,p_2,p_3))).\nonumber
\end{eqnarray}
By the syzygy method, we obtain the following equality which we call the syzygy equality
\begin{eqnarray}
 HF\{k[\bar  {\rm x}]/\langle p_1,p_2,p_3 \rangle\}&=&F(a,t)-F(a,t-d_1)-HF\{k[\bar  {\rm x}]/\langle m_{12} \rangle(-d_2)\}\nonumber\\ & &-HF\{k[\bar  {\rm x}]/\langle m_{13},m_{23} \rangle(-d_3)\}\nonumber
\end{eqnarray}
Applying the syzygy method to the third and fourth summands on the right hand side we have 
\begin{eqnarray}
 HF\{k[\bar  {\rm x}]/\langle m_{12} \rangle(-d_2)\}& = &F(a,t-d_2)-F(a,t-d_2-{\rm deg}(m_{12}))\nonumber \\    
    & = & F(a,t-d_2)-F(a,t-{\rm deg}({\rm lcm}(p_1,p_2)))\nonumber 
\end{eqnarray}
and
\begin{eqnarray}
 HF\{k[\bar  {\rm x}]/\langle m_{13},m_{23} \rangle(-d_3)\}& = &F(a,t-d_3)-F(a,t-d_3-{\rm deg}(m_{13}))\nonumber \\    
    &  &- HF\left\{k[\bar  {\rm x}]/\left\langle {\rm lcm}(m_{13},m_{23})m_{23}^{-1} \right\rangle\left(-d_3-{\rm deg}(m_{23})\right)\right\}. \nonumber 
\end{eqnarray}
and
\vspace{-5mm}
\begin{eqnarray}
&HF&\{k[\bar{\rm x}]/\langle\frac{{\rm lcm}(m_{13},m_{23})}{m_{23}} \rangle(-d_3-{\rm deg}(m_{23}))\}=\nonumber\\
&= & F(a,t-d_3-{\rm deg}(m_{23})-F \left(a,t-d_3-{\rm deg}(m_{23})-{\rm deg}\left(\frac{{\rm lcm}(m_{13},m_{23}}{m_{23}}\right)\right)\nonumber \\    
    &= &F(a,t-{\rm deg}({\rm lcm}(p_2,p_3)))-F (a,t-d_3-{\rm deg}{\rm lcm}(m_{13},m_{23})) \nonumber \\
    &= &F(a,t-{\rm deg}({\rm lcm}(p_2,p_3)))-F(a,t-(d_3+{\rm deg}{\rm lcm}(m_{13},m_{23})) \nonumber\\
    &= &F(a,t-{\rm deg}({\rm lcm}(p_2,p_3)))-F\left(a,t-({\rm deg}(p_3)+{\rm deg} \left ({\rm lcm}\left(\frac{{\rm lcm}(p_1,p_3)}{p_3},\frac{{\rm lcm}(p_2,p_3)}{p_3}\right)\right)\right) \nonumber\\
    &= &F(a,t-{\rm deg}({\rm lcm}(p_2,p_3)))-F\left(a,t-\left({\rm deg}(p_3)+{\rm deg}\left(\frac{{\rm lcm}(p_1,p_2,p_3)}{p_3}\right)\right)\right) \nonumber\\
    &= &F(a,t-{\rm deg}({\rm lcm}(p_2,p_3)))-F(a,t-{\rm deg}({\rm lcm}(p_1,p_2,p_3)))). \nonumber
\end{eqnarray}
Back-substituting the iterated results of the Syzygy method into the syzygy equality produces the same alternating sum as the lcm-method.  Thus, we proved that the lcm-lattice method is valid.
\newpage

The following examples are based on the Syzygy method.
\begin{example}
Find the Hilbert function of 
$M=R/\langle x^2,y^3 \rangle$, where 
$R=k[x,y,z]$.
\end{example}
We will only need the syzygy 
$m_{12}=\frac{{\rm lcm}(x^2,y^3)}{y^3}=\frac{x^2y^3}{y^3}=x^2$. 

Computing the Hilbert function in this case requires only one use Theorem \ref{Syzygy method}, which yields the following:   
\begin{eqnarray}\label{syzygy equality}
{\rm HF}\{M\}&=&{\rm HF}\{R\}-{\rm HF}\{R({\rm -deg}(x^2))\}-{\rm HF}\{R/{\langle m_{12}  \rangle}  ({\rm -deg}(y^3))\}\nonumber\\ &=&{\rm HF}\{R\}-{\rm HF}\{R(-2)\}-{\rm HF}\{R/{\langle x^2 \rangle} (-3)\}.
\end{eqnarray}
Based on the Corollary \ref{HF for principal ideal}, we see that the last term in (\ref{syzygy equality}) it is shifted by 3, so we have
\begin{eqnarray}\label{to sub into syzygy equality}
{\rm HF}\{R/{\langle x^2 \rangle} (-3)\}&=&{\rm HF}\{R(-3)\}-{\rm HF}\{R ({\rm -deg}(x^2))(-3)\}\nonumber\\ &=&{\rm HF}\{R(-3)\}-{\rm HF}\{R(-5)\}.
\end{eqnarray}
Therefore, by substituting equation (\ref{to sub into syzygy equality}) into equation (\ref{syzygy equality}), we have
 \begin{eqnarray}
{\rm HF}\{M\}&=&{\rm HF}\{R\}-{\rm HF}\{R(-2)\}-[{\rm HF}\{R(-3)\}-{\rm HF}\{R(-5)\}]\nonumber\\ &=&{\rm HF}\{R\}-{\rm HF}\{R(-2)\}-{\rm HF}\{R(-3)\}+{\rm HF}\{R(-5)\}.\nonumber
\end{eqnarray}
The Hilbert function of $M$ is presented in the last column of the following row-generating table
\begin{center}
\scalebox{1.1}{
\begin{tabular}{ l | l | l | l | l}
${\rm HF}\{R\}$ & ${\rm -HF}\{R(-2)\}$ & {\rm -HF}\{R(-3)\} & {\rm HF}\{R(-5)\} & ${\rm HF}\{M\}$\\
\hline
 ~1 & ~~~~~0 & ~~~~~0 & ~~~~~~~~0 &~~~ $\bf 1$ \\
\hline
 ~3 & ~~~~~0 & ~~~~~0 & ~~~~~~~~0 &~~~ $\bf 3$\\
\hline
 ~6 & ~~~~-1 & ~~~~~0 & ~~~~~~~~0 &~~~ $\bf 5$ \\
\hline
 10 & ~~~~-3 & ~~~~-1 & ~~~~~~~~0 &~~~ $\bf 6$ \\
\hline
 15 & ~~~~-6 & ~~~~-3 & ~~~~~~~~0 &~~~ $\bf 6$ \\
\hline
 21 & ~~~-10 & ~~~~-6 & ~~~~~~~~1 &~~~ $\bf 6$ \\
\hline
 28 & ~~~-15 & ~~~-10 & ~~~~~~~~3 &~~~ $\bf 6$ \\
\hline
 36 & ~~~-21 & ~~~-15 & ~~~~~~~~6 &~~~ $\bf 6$ \\
\hline
 45 & ~~~-28 & ~~~-21 & ~~~~~~~10 &~~~ $\bf 6$ \\
\hline
 55 & ~~~-36 & ~~~-28 & ~~~~~~~15 &~~~ $\bf 6$ \\
\hline
 66 & ~~~-45 & ~~~-36 & ~~~~~~~21 &~~~ $\bf 6$ \\
\hline
 78 & ~~~-55 & ~~~-45 & ~~~~~~~28 &~~~ $\bf 6$ \\
\hline
 ~.. & ~~~~.. & ~~~~~.. & ~~~~~~~~..&~~~~.. \\
\hline
~.. & ~~~~.. & ~~~~~.. & ~~~~~~~~..&~~~~.. \\
\end{tabular}}
\end{center}

\vspace{8mm}
In the next example, we apply the Syzygy method to quotient rings whose monomial ideal consists of more than two monomials.

\begin{example}
Denote by  $R=k[x,y,z]$.  Find the Hilbert function of the quotient ring $M=R/\langle  xz,yz,x^2y \rangle$.  Observe that in this example we have the following syzygies,
\end{example}
\begin{center}
$m_{12}=\frac{{\rm lcm}(xz,yz)}{yz}=\frac{xyz}{yz}=x$,\\
$m_{13}=\frac{{\rm lcm}(xz,x^2y)}{x^2y}=\frac{x^2yz}{x^2y}=z$,\\
$m_{23}=\frac{{\rm lcm}(yz,x^2y)}{x^2y}=\frac{x^2yz}{x^2y}=z$.
\end{center}
\vspace{-2mm}
Using now the syzygy method we can express the Hilbert function of $M$ as follows
\begin{eqnarray}\label{syzygy equality2}
{\rm HF}\{M\}&=&{\rm HF}\{R\}-{\rm HF}\{R({\rm -deg}(xz))\}-{\rm HF}\{R/{\langle m_{12}  \rangle}  ({\rm -deg}(yz))\}\nonumber\\
& & -{\rm HF}\{R/{\langle m_{13},m_{23}  \rangle} ({\rm -deg}(x^2y))\}\nonumber\\ &=&{\rm HF}\{R\}-{\rm HF}\{R(-2)\}-{\rm HF}\{R/{\langle x \rangle} (-2)\}-{\rm HF}\{R/{\langle z  \rangle} (-3)\}.
\end{eqnarray}
The last two terms of \ref{syzygy equality2} equal to the second row of the Pascal table but shifted since
${\rm HF}\{R/{\langle x \rangle} (-2)\}\cong {\rm HF}\{k[y,z](-2)\}$\quad and \quad
${\rm HF}\{R/{\langle z \rangle} (-3)\}\cong {\rm HF}\{k[x,y](-3)\}$.

Thus we obtain the Hilbert function of $M$  shown in the last column of the table below.
\begin{center}
\scalebox{0.85}{
\begin{tabular}{ c|c|c|c|c}
${\rm HF}\{R\}$ & ${\rm -HF}\{R(-2)\}$ & ${\rm -HF}\{R/{\langle x \rangle}  (-2)\}$ & ${\rm -HF}\{R/{\langle z  \rangle}  (-3)\}$ & ${\rm HF}\{M\}$\\
\hline
 ~1 & 0 & 0 & 0 &$\bf 1$ \\
\hline
 ~3 & 0 & 0 & 0 & $\bf 3$\\
\hline
 ~6 & -1 & -1 & 0 & $\bf 4$ \\
\hline
 10 & -3 & -2 &-1 & $\bf 4$ \\
\hline
 15 &-6 &-3 &-2 & $\bf 4$ \\
\hline
 21 & -10 & -4 & -3 & $\bf 4$ \\
\hline
 28 & -15 &-5 &-4 & $\bf 4$ \\
\hline
 36 & -21 & -6 &-5 & $\bf 4$ \\
\hline
 45 & -28 &-7 & -6 & $\bf 4$ \\
\hline
 55 & -36 & -8 & -7 &$\bf 4$ \\
\hline
 66 & -45 & -9 & -8 &$\bf 4$ \\
\hline
 78 & -55 & -10 & -9 & $\bf 4$ \\
\hline
 .. & .. & .. &..&.. \\
%\hline
 %.. & .. & .. &..&.. \\
\end{tabular}}
\end{center}

\begin{example}
Compute the Hilbert function of the module 
\begin{center}
 $M=k[x,y,z]/\langle  x^2z^2,xz^3,xy^4z,x^2y^3z \rangle$. 
\end{center}
\end{example}
We denote $R=k[x,y,z]$, as before.
Then compute the following list of relevant syzygies:
\begin{align}
m_{12}=&\frac{x^2z^3}{xz^3}=x,\nonumber\\
m_{13}=&\frac{x^2y^4z^2}{xy^4z}=xz,\nonumber\\
m_{23}=&\frac{xy^4z^3}{xy^4z}=z^2,\nonumber\\
m_{14}=&\frac{x^2y^3z^2}{x^2y^3z}=z,\nonumber\\
m_{24}=&\frac{x^2y^3z^3}{x^2y^3z}=z^2,\nonumber\\
m_{34}=&\frac{x^2y^4z}{x^2y^3z}=y.\nonumber
\end{align}
Based on the syzygy method we have 
\begin{eqnarray}\label{syzygy equality3}
{\rm HF}\{M\}&=&{\rm HF}\{R\}-{\rm HF}\{R({\rm -deg}(x^2z^2))\}-{\rm HF}\{R/{\langle x  \rangle}  (-{\rm deg}(xz^3))\}\nonumber\\
& &-{\rm HF}\{R/{\langle xz,z^2  \rangle} (-{\rm deg}(xy^4z))\}-{\rm HF}\{R/{\langle z,z^2,y  \rangle} (-{\rm deg}(x^2y^3z))\}\nonumber\\&=&{\rm HF}\{R\}-{\rm HF}\{R(-4)\}-{\rm HF}\{R/{\langle x  \rangle}  (-4))\}-{\rm HF}\{R/{\langle xz,z^2 \rangle} (-6)\}\nonumber\\
& &-{\rm HF}\{R/{\langle z,z^2,y  \rangle} (-6)\}.
\end{eqnarray}
From (\ref{syzygy equality3}) we can see that 
\begin{equation}\label {syzygy term1}
{\rm HF}\{R/{\langle x \rangle} (-4)\}\cong {\rm HF}\{k[y,z](-4)\}
\end{equation}
and 
\begin{equation}\label{syzygy term2}
{\rm HF}\{R/{\langle z,z^2,y \rangle} (-6)\}\cong {\rm HF}\{R/{\langle y,z \rangle} (-6)\}\cong {\rm HF}\{k[x](-6)\}. 
\end{equation}
Therefore, equation(\ref{syzygy term1}) is given by the $2^{\rm nd}$ row of the Pacal table shifted down by four and equation (\ref{syzygy term2}) is given by the $1^{\rm st}$ row of the Pascal table shifted down by six.

Moreover, in order to find the Hilbert function of $M$ we need to find the ${\rm HF}\{R/{\langle xz,z^2 \rangle} (-6)\}$.
Applying again the syzygy method to the fourth summand on the right hand side of (\ref{syzygy equality3})   we have that 
\begin{center}
$m_{12}=\frac{xz^2}{z^2}=x$.
\end{center}
Observe that the shifting is equally distributed in all the terms as follows
\begin{eqnarray} \label{syzygy term5}
{\rm HF}\{R/\langle xz,z^2 \rangle(-6)\}& = &{\rm HF}\{R(-6)\}-{\rm HF}\{R({\rm -deg}(xz))(-6)\}-{\rm HF}\{R/{\langle x  \rangle}  (-{\rm deg}(z^2))(-6)\}\nonumber\\&=&{\rm HF}\{R(-6)\}-{\rm HF}\{R(-2)(-6)\}-{\rm HF}\{R/{\langle x  \rangle}  (-2)(-6)\}\nonumber\\&=&{\rm HF}\{R(-6)\}-{\rm HF}\{R(-8)\}-{\rm HF}\{R/{\langle x  \rangle}  (-8)\}\nonumber\\&=&{\rm HF}\{R(-6)\}-{\rm HF}\{R(-8)\}-{\rm HF}\{k[y,z] (-8)\}\nonumber 
\end{eqnarray}
This way we obtain the following row-generating table
\begin{center}
\scalebox{0.9}{
\begin{tabular}{ c|c|c|c}
${\rm HF}\{R(-6)\}$ & ${\rm -HF}\{R(-8)\}$ & ${\rm -HF}\{k[y,z] (-8)\}$ & ${\rm HF}\{R/\langle xz,z^2 \rangle(-6)\}$\\[-3pt]
\hline
 0 & 0  & 0 &  $0$ \\[-3pt]
\hline
 0 &0  &0 &  $0$\\[-3pt]
\hline
 0 &0  &  0 & $ 0$ \\[-3pt]
\hline
 0 &0  &0 & $ 0$ \\[-3pt]
\hline
 0 & 0  &0 & $0$ \\[-3pt]
\hline
 0 & 0  &  0 & $0$ \\[-3pt]
\hline
 1 & 0  &0 & $1$ \\[-3pt]
\hline
 3 &0  &0 & $3$ \\[-3pt]
\hline
 6 &-1  &-1 & $4$ \\[-3pt]
\hline
 10 & -3  & -2 & $5$ \\[-3pt]
\hline
 15 & -6  &  -3 & $6$ \\[-3pt]
\hline
.. & ..   &.. &.. \\[-3pt]
\hline
.. & ..   &.. &.. 
\end{tabular}}
\end{center}
\vspace{4mm}
Substituting now (\ref{syzygy term1}),(\ref{syzygy term2})  as well as the last column of the above table into (\ref{syzygy equality3}),  we compute the Hilbert function of $M$ \\[7pt]

\begin{center}
\scalebox{0.82}{
\begin{tabular}{ c | c | c | c | c | c }
${\rm HF}\{R\}$ & ${\rm -HF}\{R(-4)\}$ & ${\rm -HF}\{k[y,z](-4)\}$ & ${\rm -HF}\{R/\langle xz,z^2\rangle(-6)\}$ & ${\rm -HF}\{k[x](-6)\}$ & ${\rm HF}\{M\}$\\
\hline
 1 & 0 & 0 & 0 & 0 &$\bf 1$ \\
\hline
 3 & 0 & 0 & 0 & 0 &$\bf 3$ \\
\hline
  6 & 0 & 0 & 0 & 0 &$\bf 6$ \\
\hline
  10 & 0 & 0 & 0 & 0 &$\bf 10$ \\
\hline
 15 & 1 & 1 & 0 & 0 &$\bf 13$ \\
\hline
 21& -3 & -2 & 0 & 0 &$\bf 16$ \\
\hline
  28 & -6 & -3 & -1 & -1 &$\bf 17$ \\
\hline
  36 &-10 & -4 & -3 & -1 &$\bf 18$ \\
\hline
 45 & -15 & -5 & -4 & -1 &$\bf 20$ \\
\hline
  55 & -21 & -6 & -5 & -1 &$\bf 22$ \\
\hline
 66 & -28 & -7 & -6 & -1 &$\bf 24$ \\
\hline
 .. & .. & .. & ..&..&.. \\
\hline
  .. & .. & .. & ..&..&.. 
\end{tabular}}\\[15pt]
\end{center}

%%%%%%%%%%%%%%%%%%%%%%%%%%%%%%%%%%%%%%%%%%%%%%%%%%%%%%%%%%%%%%%%%%%%%%%%%%%%%

\section{Syzygy method via homological algebra}
The short exact sequence that involves $\phi_{x_a}:=\text{multiplication by }x_a$ (see  \cite{villarreal2015monomial} page 98) works well with the assemblage row-by-row of a Hilbert function table. That is because the key homomorphism in the short exact sequence is multiplication by a variable followed by natural projection. Consequently, the last non-zero object of the short exact sequence is the cokernel of $\phi_{x_a}$. This cokernel as we saw in section 2, turns out to be the quotient ring corresponding to the row in the Hilbert function table immediately preceding the introduction of the variable $x_a$. In other words, of the two Hilbert function sequences that the short exact sequence needs to generate the the Hilbert function of $k[x_1,x_2,...,x_a]/I_a$, one of them (the right-most) is the Hilbert function of  $k[x_1,x_2,...,x_{a-1}]/I_{a-1}$. Therefore, any remaining difficulty would be confined to finding the Hilbert function for the kernel  of $\phi_{x_a}$.
%\begin{center}
%$0\longrightarrow I\longrightarrow k[\bar{\rm x}] \overset{\pi}\longrightarrow k[\bar{\rm x}]/I \longrightarrow 0$.
%\end{center}
%leads us to the "rank-nullity" theorem on which we built the syzygy method. If instead we focus on multiplication by variable $x_a$ we find that we can use the information provided by the syzygies to find the annihilator $(0:x_a)_{M_a}$ for this multiplication homomorphism. This approach allows us to introduce one-variable-at-a time and the corresponding monomials in the ideal. As seen in chapter 2 we would then be able to assemble the Hilbert function table for the family
%\begin{center}
%$\{k[x_1,x_2,...,x_a]/I_a\}_{a=1}^{\infty}$
%\end{center}

%with the help of the s.e.s found in \ref{Villarreal}.
In this section, we make use of the same set up as in section 2. Let $S=\{p_1,p_2,p_3,....,p_r\}$, where $p_1,p_2,p_3,...,p_r$ are monomials in the variables $x_1,x_2,...x_d$. Extend this set of variables to an infinite set of variables $x_1,x_2,...,x_d,x_{d+1},...$. For an integer value $a$, let $S_a=\{p_i \in S:p_i \in k[x_1,x_2,...,x_a]\}$. Re-index, if necessary, the set $S$ such that
\begin{enumerate}
\item{$S_a' \subset S_a$ if $a' \geq a$}
\item{For $p_i,\,p_j \in S_a$, $j > i$ only if the highest power of $x_a$ dividing $p_i$ also divides $p_j$.}
\end{enumerate}   
The reader should observe that the first requirement of this re-indexing of the generators of $I$ has the purpose of introducing the generators for the ideals $I_a$ in consecutive order as the variables $x_a$ are introduced one-by-one.  The second criteria for the re-index ensures that, as the set $S_{a-1}$ is enlarged to $S_a$, the new monomials are ordered in (non-strict) increasing order of the power of $x_a$.  This second criteria is done to ensure that the variable $x_a$ does not appear in the syzygies we might need to compute as we generate the $a^{\rm th}$-row of the Hilbert table.  Also, observe that if $S_a=\emptyset$ then set $I_a=0$; otherwise set $I_a= \langle p_i \, | \, p_i \in S_a\rangle$.  Let $M_a=k[x_1,x_2,...,x_a]/I_a$. Construct an infinite array whose ${\rm a}^{th}$ row is the sequence of Hilbert function values of $M_a$.  

Consider the following \emph{short exact sequence} where $\phi_{x_a}$ is multiplication by $x_a$, the module $M_a=k[x_1, x_2, \ldots, x_a]/I_a$, and $(0:x_a)_{M_a}=\ker \phi_{x_a}$ ,
$$0 \rightarrow (0:x_a)_{M_a}(-1)\rightarrow M_a(-1)\overset{\phi_{x_a}}\rightarrow M_a\rightarrow M_a/x_aM_a\rightarrow 0.$$
Set $S_0=\emptyset$ and for $a \geq 1$, if $S_{a-1} \subsetneq S_a$ set  $$U_{x_a}=\{q_i=\frac{p_i}{x_a} \, : \,p_i \in S_a \setminus S_{a-1}\}.$$
If $S_{a-1}=S_a$ then set $U_{x_a}=\emptyset$.
\begin{lemma}
$(0:x_a)_{M_a}=\langle q_i:q_i \in U_{x_a} \rangle _{M_a}$.
\end{lemma}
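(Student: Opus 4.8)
The plan is to establish the two inclusions separately, working inside the ring $M_a = k[x_1,\dots,x_a]/I_a$ and using the fact that $I_a$ is a monomial ideal, so that a monomial represents a nonzero element of $M_a$ precisely when it is not divisible by any generator $p_i \in S_a$. First I would note that since $(0:x_a)_{M_a}$ is a monomial submodule of $M_a$ (the annihilator of multiplication by a monomial in a monomial quotient ring is again generated by monomials), it suffices to compare the two sides on monomial representatives. So let $\bar m$ be a nonzero monomial in $M_a$; I want to show $x_a \bar m = 0$ in $M_a$ if and only if $\bar m \in \langle q_i : q_i \in U_{x_a}\rangle_{M_a}$, i.e.\ $m$ is divisible by some $q_i = p_i/x_a$ with $p_i \in S_a \setminus S_{a-1}$.

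For the direction $\supseteq$: if $q_i \mid m$ for some $p_i \in S_a \setminus S_{a-1}$, then $p_i = x_a q_i \mid x_a m$, so $x_a m \in I_a$ and $x_a \bar m = 0$; this shows each generator $q_i$ lies in $(0:x_a)_{M_a}$, and since the latter is a submodule the whole ideal is contained in it. For the direction $\subseteq$: suppose $\bar m \neq 0$ but $x_a \bar m = 0$, so $x_a m \in I_a$, meaning some generator $p_j \in S_a$ divides $x_a m$ while no generator divides $m$. Since $p_j \nmid m$ but $p_j \mid x_a m$, the variable $x_a$ must actually be used: writing $e$ for the exponent of $x_a$ in $p_j$, we get $e \geq 1$, and the exponent of $x_a$ in $m$ is exactly $e-1$ (it cannot be $\geq e$, else $p_j \mid m$). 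Now I claim $p_j \in S_a \setminus S_{a-1}$: indeed $e \geq 1$ forces $x_a \mid p_j$, so $p_j \notin k[x_1,\dots,x_{a-1}]$, hence $p_j \notin S_{a-1}$. Then $q_j = p_j/x_a \in U_{x_a}$, and since $p_j \mid x_a m$ with the $x_a$-exponent matching up, we conclude $q_j \mid m$, so $\bar m \in \langle q_i : q_i \in U_{x_a}\rangle_{M_a}$.

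The step I expect to require the most care is the exponent bookkeeping in the $\subseteq$ direction — specifically the claim that the $x_a$-exponent of $m$ is exactly $e-1$ and that $q_j \mid m$ in $M_a$ (as opposed to merely in the polynomial ring). For the divisibility $p_j \mid x_a m$ in $k[x_1,\dots,x_a]$: for every variable $x_k$ with $k < a$, the exponent of $x_k$ in $p_j$ is at most that in $x_a m$, which equals that in $m$; and for $x_a$ we have exponent $e$ in $p_j$ versus $e$ in $x_a m$. Cancelling one factor of $x_a$ gives $q_j = p_j / x_a \mid m$ in $k[x_1,\dots,x_a]$. It remains to check this divisibility descends to $M_a$, i.e.\ that $q_j$ and $m/q_j$ both represent the relevant elements — but this is immediate once we have the polynomial-level divisibility, since $\bar m = \overline{(m/q_j)} \cdot \bar q_j$ in $M_a$. (Here the re-indexing conditions on $S$ are not logically needed for this lemma itself; they matter only for the later claim that the $q_i$ involve no $x_a$, which keeps subsequent syzygy computations in fewer variables.) Finally, I would remark that one should double-check the edge cases: if $S_{a-1} = S_a$ then $U_{x_a} = \emptyset$ and the right side is the zero submodule, consistent with the fact that no generator of $I_a$ uses $x_a$, so multiplication by $x_a$ is injective on $M_a$; and if $S_a = \emptyset$ then $M_a$ is a polynomial ring on which multiplication by $x_a$ is injective, again giving $(0:x_a)_{M_a} = 0$.
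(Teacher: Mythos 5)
Your proof is correct and follows essentially the same route as the paper's: reduce to monomials and establish the equivalence $x_a\bar m=0$ in $M_a$ if and only if some $q_i\in U_{x_a}$ divides $m$. You actually justify the one step the paper asserts without comment — that a generator $p_j$ dividing $x_am$ but not $m$ must involve $x_a$ and hence lie in $S_a\setminus S_{a-1}$ — via the exponent bookkeeping, so your write-up is, if anything, more complete.
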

\begin{proof}
If $U_{x_a}=\emptyset \Leftrightarrow x_a \nmid p_i$ for all $p_i \in S_a \Leftrightarrow \forall g \in M_a, g \neq 0$ then $x_ag \neq 0$.\\
If $ U_{x_a} \neq \emptyset$ the following equivalence holds:
\begin{align}
x_ag=0 \text { in } M_a &\Leftrightarrow p_i \mid x_ag \text{ for some }p_i \in S_a \setminus S_{a-1}\nonumber\\
 &\Leftrightarrow q_i \mid g \text{ for some } q_i \in U_{x_a}\nonumber\\
 &\Leftrightarrow g \in \langle q_i \,:\,q_i \in U_{x_a}\rangle\nonumber
\end{align}
\end{proof}
\begin{remark}
Observe that if $U_{x_a}= \emptyset$ then from the above lemma follows that $(0:x_a)_{M_a}=0$.
\end{remark}
Using the same notation for syzygies as in the previous section, namely $m_{ij}=\frac{{\rm lcm} (p_i, p_j)}{p_j}$ we now state the following lemma.
%\newpage
\begin{lemma}
A non-zero monomial $g \in (0:x_a)_{M_a}$ can be written as follows for one and only one $q_i \in U_{x_a}$,
\begin{enumerate}
\item{$g=\alpha_1q_1$ if $q_1 \in U_{x_a}$}
\item{$g=\alpha_jq_j$ if  $q_j \in U_{x_a}$ and $m_{ij}\nmid \alpha_j$ for all $1 \geq i < j$}
\end{enumerate}
and conversely any $g$ satisfying one of the equations above, belongs to $(0:x_a)_{M_a}$.
\end{lemma}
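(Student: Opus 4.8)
The plan is to reproduce, at the level of the generators $q_i$ of the annihilator, the Gram--Schmidt-style partition of a union of principal monomial ideals that appeared informally in Section 2 and was made rigorous in the Syzygy method of Section 4: assign to each nonzero monomial $g \in (0:x_a)_{M_a}$ the smallest index $j$ with $q_j \mid g$, and show that $\alpha_j = g/q_j$ is then automatically not divisible by any $m_{ij}$ with $i<j$, and that no other index works. The engine of the whole argument is a single exponent comparison that must be recorded first: for $i<j$ one has $m_{ij} = \dfrac{{\rm lcm}(q_i,q_j)}{q_j}$, i.e.\ the syzygy of the pair $p_i,p_j$ equals the syzygy of the pair $q_i,q_j$. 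This follows from ${\rm lcm}(p_i,p_j) = x_a\cdot{\rm lcm}(q_i,q_j)$, which holds because every $p_i \in S_a\setminus S_{a-1}$ is divisible by $x_a$ (so that $q_i = p_i/x_a$ makes sense), checked variable by variable. Once this identity is in place the $p_i$ disappear and everything is argued with the $q_i$ alone; this is also exactly what sets up using the lemma to split ${\rm HF}\{(0:x_a)_{M_a}\}$ into a sum over $U_{x_a}$ of shifted Hilbert functions of quotients by syzygy ideals.

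For existence: by the preceding lemma, $(0:x_a)_{M_a} = \langle q_i : q_i\in U_{x_a}\rangle_{M_a}$, which is the image in $M_a = k[\bar{\rm x}]/I_a$ of the monomial ideal $\langle q_i\rangle\subseteq k[\bar{\rm x}]$; since $\langle q_i\rangle + I_a$ is a monomial ideal, a monomial $g$ represents an element of this image iff $g$ is divisible by some $q_i$ or by some $p_\ell\in S_a$, and the latter would force $g=0$ in $M_a$. Hence a nonzero monomial $g\in(0:x_a)_{M_a}$ is divisible by at least one $q_i$; let $j$ be the least index with $q_j\in U_{x_a}$ and $q_j\mid g$, and set $\alpha_j = g/q_j$. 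If $m_{ij}\mid\alpha_j$ for some $i<j$, then ${\rm lcm}(q_i,q_j) = q_j m_{ij}$ divides $q_j\alpha_j = g$, so $q_i\mid g$, contradicting minimality of $j$. When $j=1$ the condition ``$m_{ij}\nmid\alpha_j$ for all $i<j$'' is vacuous (case (1)); otherwise we are in case (2).

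For uniqueness: suppose $g = \alpha_jq_j = \alpha_kq_k$ with $j<k$ and both factorizations satisfying the stated condition. Then $q_j\mid g$ and $q_k\mid g$, so ${\rm lcm}(q_j,q_k)\mid g = \alpha_kq_k$; dividing by $q_k$ gives $m_{jk} = {\rm lcm}(q_j,q_k)/q_k \mid \alpha_k$, contradicting the condition imposed at index $k$. Hence $j$, and therefore the factorization $g=\alpha_jq_j$, is unique. The converse is immediate: if $g = \alpha_jq_j$ with $q_j\in U_{x_a}$, then the class of $g$ is a multiple of the class of $q_j$, so $g\in\langle q_i:q_i\in U_{x_a}\rangle_{M_a} = (0:x_a)_{M_a}$.

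The only genuinely delicate point is the bookkeeping in the existence step: one must keep straight the difference between divisibility of monomials in $k[\bar{\rm x}]$ and membership of a class in $M_a$, and justify carefully that a nonzero monomial in the image ideal is divisible by one of the $q_i$ — this is precisely where the monomial-ideal structure of $\langle q_i\rangle + I_a$ is used. Everything else reduces to the exponent comparison giving $m_{ij} = {\rm lcm}(q_i,q_j)/q_j$ and the elementary fact that for monomials $a\mid c$ and $b\mid c$ imply ${\rm lcm}(a,b)\mid c$.
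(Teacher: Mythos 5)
Your proof is correct and follows essentially the same route as the paper's: assign to a nonzero monomial $g$ the least index $j$ with $q_j \mid g$, and use the equivalence ${\rm lcm}(q_i,q_j) \mid g \Leftrightarrow m_{ij} \mid \alpha_j$ to show every larger index violates condition 2. You are in fact a bit more careful than the paper, which silently uses the identity $m_{ij}={\rm lcm}(q_i,q_j)/q_j$ (you justify it via ${\rm lcm}(p_i,p_j)=x_a\,{\rm lcm}(q_i,q_j)$) and leaves implicit both the check that the minimal-index factorization itself satisfies condition 2 and the monomial-ideal argument that some $q_i$ must divide $g$.
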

\begin{proof}
By the previous lemma all we are left to show is uniqueness.\\
Suppose $g \in (0:x_a)_{M_a}$, let $i$ be the smallest index such that $q_i \mid g$.  Then for any $1 \geq i' < i$, $g$ cannot be written as $g=\alpha_{i'}q_{i'}$.\\
If $i < j$ and $q_j \mid g$ then 
\begin{align}
\alpha_{j}=\frac{g}{q_j}\text{ but }q_i \mid g \text{ and }q_j \mid g &\Rightarrow {\rm lcm}(q_i, q_j) \mid g\nonumber\\
&\Leftrightarrow \frac{{\rm lcm} (q_i, q_j)}{q_j} \mid \frac{g}{q_j}\Leftrightarrow m_{ij} \mid \alpha_j.\nonumber
\end{align}  
Therefore, $\alpha_j$ does not satisfy condition 2.
\end{proof}

\begin{theorem}\label{thm}
With the notation of the two lemmas above, the Hilbert function of the annihilator of the homomorphism $\phi_{x_a}$ satisfies the following formula,
\begin{align}
{\rm HF}\{(0:x_a)_{M_a}\}=&\delta(a){\rm HF}\{k[x_1, x_2, \ldots, x_{a-1}](- \deg q_1)\}\nonumber\\
&+\sum_{1<j \in \text{ Index Set } U_{x_a}}\hspace{-5mm} {\rm HF}\{k[x_1, x_2, \ldots, x_{a-1}]/\langle m_{1j}, m_{2j}, \ldots, m_{(j-1)j}\rangle(- \deg q_j)\}\nonumber,
\end{align}
where
$\delta(a)=0$ for $q_1 \notin U_{x_a}$, and $\delta(a)=1$ for $q_1 \in U_{x_a}$.
\end{theorem}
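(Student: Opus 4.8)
The plan is to reduce the computation of ${\rm HF}\{(0:x_a)_{M_a}\}$ degree by degree to counting monomials in the shifted quotient rings on the right, using the two lemmas above to organize the monomials of $(0:x_a)_{M_a}$ into disjoint families indexed by the generators in $U_{x_a}$. Since $M_a$ is a monomial quotient ring, a $k$-basis of the degree-$b$ component of $(0:x_a)_{M_a}$ consists of the images of those monomials $g$ of degree $b$ which are nonzero in $M_a$ and satisfy $x_a g = 0$ in $M_a$. By the second lemma each such $g$ is of the form $g = \alpha_j q_j$ for exactly one $q_j \in U_{x_a}$, namely the one of least index $j$ with $q_j \mid g$, and this occurs precisely when $m_{ij} \nmid \alpha_j$ for all $i < j$. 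Writing $B_j(b)$ for the set of monomials $g$ of degree $b$ arising from a fixed $q_j$ in this way, a basis of the degree-$b$ part of $(0:x_a)_{M_a}$ is then partitioned into the sets $B_j(b)$, $j$ ranging over the index set of $U_{x_a}$, and it suffices to prove $|B_j(b)| = {\rm HF}(k[x_1,\dots,x_{a-1}]/\langle m_{1j},\dots,m_{(j-1)j}\rangle,\ b-\deg q_j)$ for each such $j$.

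To identify $B_j(b)$ with that family of monomials, I would first pin down the cofactor $\alpha_j = g/q_j$. Writing $p_j = x_a^{e_j} p_j'$ with $x_a \nmid p_j'$, so $q_j = x_a^{e_j-1} p_j'$, the requirement that $g$ be nonzero in $M_a$ --- in particular $p_j \nmid g$ --- together with $q_j \mid g$ forces the exponent of $x_a$ in $g$ to be exactly $e_j - 1$, so $\alpha_j \in k[x_1,\dots,x_{a-1}]$ and $\deg \alpha_j = b - \deg q_j$. Next I would record the divisibility dictionary on which re-indexing criterion (2) turns: since that criterion makes the $x_a$-exponents of $p_1,\dots,p_r$ non-decreasing in the index, every syzygy $m_{ij}$ with $i < j$ is itself free of $x_a$ and equals ${\rm lcm}(p_i,p_j')/p_j'$, so that $p_i \mid g \iff m_{ij} \mid \alpha_j$ for all $i < j$ (and, when $p_i$ is new, also $q_i \mid g \iff m_{ij} \mid \alpha_j$), whereas for $i \ge j$ the inequality $e_i \ge e_j > e_j - 1$ makes $p_i \nmid g$ automatic. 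Putting this together, a monomial $g$ of degree $b$ lies in $(0:x_a)_{M_a}$, is nonzero in $M_a$, and has $q_j$ as its least-index divisor in $U_{x_a}$ if and only if $g = \alpha_j q_j$ with $\alpha_j$ an $x_a$-free monomial of degree $b - \deg q_j$ satisfying $m_{ij} \nmid \alpha_j$ for all $i < j$ --- that is, a nonzero monomial of degree $b - \deg q_j$ in $k[x_1,\dots,x_{a-1}]/\langle m_{1j},\dots,m_{(j-1)j}\rangle$. This gives the desired bijection; the converse direction (start from such an $\alpha_j$, set $g = \alpha_j q_j$, and check it is nonzero in $M_a$, annihilated by $x_a$ since $x_a g = \alpha_j p_j$, and reduced at index $j$) follows from the same dictionary.

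Summing $|B_j(b)|$ over the index set of $U_{x_a}$ yields the formula once the least index is split off. If $p_1 \in S_a \setminus S_{a-1}$ --- equivalently $\delta(a)=1$ --- then $1$ lies in the index set, and since there is no $m_{i1}$ with $i < 1$ the quotient ring attached to $j=1$ is just $k[x_1,\dots,x_{a-1}]$, contributing $\delta(a)\,{\rm HF}\{k[x_1,\dots,x_{a-1}](-\deg q_1)\}$; the remaining indices $j > 1$ give the displayed sum. If $p_1 \notin S_a \setminus S_{a-1}$, so $\delta(a)=0$, the index set contains only indices $> 1$ --- it is empty exactly when $U_{x_a} = \emptyset$, in which case $(0:x_a)_{M_a}=0$ by the remark and both sides vanish --- and the same count reproduces the claimed equality.

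I expect the main obstacle to be the exponent bookkeeping in the middle step: checking simultaneously that $\alpha_j$ is genuinely free of $x_a$, that each relevant syzygy $m_{ij}$ equals ${\rm lcm}(p_i,p_j')/p_j'$ and hence is free of $x_a$, and that no generator $p_i$ with $i \ge j$ divides $g$. All three rest on re-indexing criterion (2). The one point worth emphasizing is that the index $i$ in the condition ``$m_{ij}\nmid\alpha_j$ for all $i<j$'' must run over \emph{every} generator of smaller index, including the old ones $p_i \in S_{a-1}$: those syzygies encode precisely the vanishing conditions inherited from $I_{a-1}$, and omitting them would over-count $B_j(b)$. With the dictionary in hand, the remaining steps --- recognizing a count of degree-$b$ monomials in a monomial quotient ring as a Hilbert-function value, and handling the degree shift via ${\rm HF}\{N(-d)\} = {\rm HF}(N, t-d)$ --- are the routine facts already established in Section 1.
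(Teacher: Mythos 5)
Your proposal is correct and follows essentially the same route as the paper: partition the monomials of $(0:x_a)_{M_a}$ by their least-index divisor $q_j\in U_{x_a}$ (the content of the two preceding lemmas), identify the cofactors $\alpha_j$ with the nonzero monomials of $k[x_1,\dots,x_{a-1}]/\langle m_{1j},\dots,m_{(j-1)j}\rangle$ in the shifted degree, and sum over $j$, splitting off $j=1$ for the $\delta(a)$ term. The only difference is presentational: where the paper appeals to the second and third isomorphism theorems to place $\alpha_j$ in the right quotient ring, you carry out the $x_a$-exponent bookkeeping explicitly, which in fact makes visible why re-indexing criterion (2) is needed (it forces every $m_{ij}$ with $i<j$ to be free of $x_a$).
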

\begin{proof}
If $q_1 \in U_{x_a}$ and $q_1 \mid g$ then $\alpha_1 \in k[x_1, x_1, \ldots, x_{a-1}]/I_{a-1}$ and $\deg(a_1)=b - \deg(q_1)$.  But since $U_{x_{a-1}}=\emptyset$, then $I_{a-1}=0$ which gives us the summand with $\delta(a)=1$.\\
If $q_1 \notin U_{x_a}$ then $\delta(a)=0$ and the first summand is irrelevant.\\
Moreover, for all $g \in (0:x_a)_{M_a}$ expressible as $g=\alpha_jq_j$ with $m_{ij} \nmid \alpha_j$, $1 \geq i < j$, then 
\begin{align}
\alpha_j \in &\left(k[x_1, x_2, \ldots, x_{a-1}]/I_{a-1} \right)/\langle m_{1j}, m_{2j}, \ldots, m_{(j-1)j}\rangle\nonumber\\ 
&\cong k[x_1, x_2, \ldots, x_{a-1}]/\langle m_{1j}, m_{2j}, \ldots, m_{(j-1)j}\rangle.\nonumber
\end{align}
The last isomorphism being due to the second and third isomorphism theorems.
\end{proof}
\begin{remark}
Observe that if $U_{x_a}= \emptyset$ then the sum in the theorem is zero, i.e. ${\rm HF}\left((0:x_a)_{M_a},b\right)=0$ for all $b \geq 0$.
\end{remark}
With the Hilbert function for the annihilator $(0:x_a)_{M_a}$ and the Hilbert function for $M_a/x_aM_a \cong M_{a_1}$ in hand, it is straightforward to implement the procedure outlined in section 2 to generate the Hilbert function of $M_a$.  For that reason, we only show in the next example how to write the Hilbert function of the annihilator in terms of the Hilbert function of simpler quotient rings.

\begin{example}
Use the theorem \ref{thm} to write a sum equivalent to the non-trivial annihilator ideals $(0:x_a)_{M_a}$ where $I=\langle y^6, x^3y^5, x^2y^2z^2, x^3z, x^2yz^3 \rangle$ and the variables are ordered $y, x, z, w_1, w_2, \ldots$.
\end{example}
Before embarking in the computations, we check if the set of generators of $I$ needs re-indexing given the order we have chosen to introduce the variables (this order is quirky in that the variable $y$ is introduced before the variable $x$  and was chosen to illustrate that we are free to select the order in which the variables are introduced).  The criteria that  $S_a' \subset S_a$ if $a' \geq a$ is satisfied by the order in which the monomials generating $I$ are listed.  However, the second criteria; namely, that for $p_i,\,p_j \in S_a$, $j > i$ only if the highest power of $x_a$ dividing $p_i$ also divides $p_j$, requires that the order of the monomials $x^2y^2z^2$ and $x^3z$ be swapped.  Observe that adjusting the indexing to satisfy the second criteria does not interfere with the first criteria.  In other words, after swapping the third and fourth monomials we get $I=\langle y^6, x^3y^5, x^3z, x^2y^2z^2, x^2yz^3 \rangle$ which satisfies both re-indexing criteria.
The first annihilator is $(0:y)_{M_y}$, where $M_y = k[y]/\langle y^6 \rangle$. In this case, $U_y=\{y^5\}$ and ${\rm HF}\{(0:y)_{M_y}\}={\rm HF}\{k(-5)\}.$
The second annihilator is $(0:x)_{M_x}$, where $M_x=k[y, x]/\langle y^6, x^3y^5\rangle$ and $U_x=\{x^2y^5\}$. There is only the syzygy $m_{12}=y$ to consider.  Therefore,
$${\rm HF}\{(0:x)_{M_x}\}={\rm HF}\{k[y]/\langle y\rangle(- 7)\}={\rm HF}\{k(- 7)\}.$$
The third and last non-trivial annihilator is $(0:z)_{M_z}$, where $$M_z=k[y, x, z]/\langle y^6, x^3y^5, x^3z, x^2y^2z^2, x^2yz^3 \rangle.$$  In this case, $U_z=\{x^3, x^2y^2z, x^2yz^2\}$.  The syzygies to consider are 
\begin{align}
&m_{13}=y^6 &&m_{23}=y^5 &&\phantom{m_{13}=y^4} &&\phantom{m_{23}=x^3y^3}\nonumber\\
&m_{14}=y^4 &&m_{24}=xy^3 &&m_{34}=x  &&\phantom{m_{23}=x^3y^3}\nonumber\\ 
&m_{15}=y^5 &&m_{25}=xy^4 &&m_{35}=x  &&m_{45}=y.\nonumber
\end{align}
Therefore,
\begin{align}
{\rm HF}\{(0:z)_{M_z}\}=&{\rm HF}\{k[y,x]/\langle y^6, y^5\rangle(- 3)\}\nonumber\\
&+{\rm HF}\{k[y,x]/\langle y^4, xy^3, x\rangle(- 5)\}\nonumber\\
&+{\rm HF}\{k[y,x]/\langle y^5, xy^4, x, y\rangle(- 5)\}\nonumber\\
=&{\rm HF}\{k[y,x]/\langle y^5\rangle(- 3)\}\nonumber\\
&+{\rm HF}\{k[y]/\langle y^4\rangle(- 5)\}+{\rm HF}\{k(- 5)\}.\nonumber
\end{align}

\section{Conclusion}
As the reader can see the Syzygy method via homological algebra is quite close in spirit to the Syzygy method discussed in the previous section.  The only significant difference is in the tools used to prove it.  Therefore, all the information about the Hilbert function was obtain from the syzygies.  Last but not least, we have developed two different approaches to computing the Hilbert function of a quotient ring: the lcm-lattice method and the syzygy method.

\bibliographystyle{unsrt}
\bibliography{references}

\end{document}